\begin{document}

\author{Anna Kh.~Balci}

\address{Anna Kh.~Balci,  Charles University  in Prague, Department of Mathematical Analysis
Sokolovsk\'a 49/83, 186 75 Praha 8, Czech Republic and University of Bielefeld, Department of Mathematics,  Postfach 10 01 31, 33501 Bielefeld, Germany. }
\email{akhripun@math.uni-bielefeld.de}
\thanks{Anna Kh. Balci is funded by the Deutsche Forschungsgemeinschaft (DFG, German Research Foundation) - SFB 1283/2 2021 - 317210226, Charles University in Prague PRIMUS/24/SCI/020 and Research Centre program No. UNCE/24/SCI/005.
}
\author{Ho-Sik Lee}
\address{Ho-Sik Lee, University of Bielefeld, Department of Mathematics,  Postfach 10 01 31, 33501 Bielefeld, Germany.}
\email{ho-sik.lee@uni-bielefeld.de}
\thanks{Ho-Sik Lee is funded by Deutsche Forschungsgemeinschaft through GRK 2235/2 2021 - 282638148.}
\title{Zaremba problem with degenerate weights}

\keywords{nonlinear elliptic equations; degenerate weights; Zaremba problem; Meyers property; Muckenhoupt classes}

\subjclass[2020]{%
35J60, %  Nonlinear elliptic equations
35B65, % Smoothness and regularity of solutions to PDEs
35J15.  %Second-order elliptic equations
}

\begin{abstract}
We establish Zaremba problem for Laplacian and~$p$-Laplacian with degenerate weights when the Dirichlet condition is only imposed in a set of positive weighted capacity. We prove weighted Sobolev-Poincar\'{e} inequality with sharp scaling-invariant constants involving weighted capacity. Then we show higher integrability of the gradient of the solution (Meyers estimate) with minimal conditions on the part of the boundary where the Dirichlet condition is assumed. Our results are new both for the linear~$p=2$ and nonlinear case and include problems with the weight not only as a measure but also as a multiplier of the gradient of the solution.
\end{abstract}

\maketitle
\section{Introduction}

The purpose of this article is to obtain the small higher integrability (Meyers property) for the solution of   mixed boundary value problem -- Zaremba problem -- for the second order elliptic equation with degenerate weights.  The prototype equation is the weighted Laplacian of the form
\begin{align}\label{eq:lap}
\begin{split}
-\divergence\left(\mathbb{A}(x)\nabla u\right)&=l\quad\text{in }\Omega,\\
u&=0\quad\text{on }\mathcal{D},\\
\dfrac{\partial u}{\partial \nu}&=0\quad\text{on }\mathcal{N},
\end{split}
\end{align}
and weighted~$p$-Laplacian of the form
\begin{align}
  \label{eq:plap.multiplier}
  \begin{alignedat}{3}
    -\divergence(|\setM(x)\nabla u|^{p-2}\setM^2(x)\nabla u) &=l &\qquad &\text{in }\Omega,
    \\
    u &=0&\qquad&\text{on }\mathcal{D},\\
    \frac{\partial u}{\partial \nu}&=0&\qquad&\text{on }\mathcal{N}.
    \end{alignedat}
\end{align} 
Here, $1<p<\infty$,  $\Omega\subset\setR^n$ is a bounded Lipschitz   domain with $n\geq 2$. The set $\mathcal{D}\subset \partial\Omega$ is a closed set where the Dirichlet condition is assumed and~$\mathcal{N}=\partial \Omega\setminus \mathcal{D}$ is the part of the boundary where the Neumann condition is. We point out that the Dirichlet and Neumann conditions can irregularly be alternating on the boundary. The data $l$ in the right-hand side is a linear functional for suitable function spaces. The coefficient functions $\mathbb{A},\mathbb{M}:\setR^n\rightarrow\setR^{n\times n}_{\text{sym}}$ are symmetric and positive definite matrix-valued weights which satisfy $\setM^2(x)=\mathbb{A}(x)$ and
\begin{align}
\label{eq:M}
\abs{\setM(x)}\, \abs{\setM^{-1}(x)}\leq\Lambda
\end{align} 
for any $x\in\setR^n$ with some constant $\Lambda\geq 1$, where by~$\abs{\cdot}$ we denote the spectral norm of a matrix. Note that the condition \eqref{eq:M} is equivalent to say that $\bbM$ has a uniformly bounded condition number $\Lambda$. The typical example of such weight is $\setM(x)=|x|^{\pm\epsilon}\identity$ for small $\epsilon>0$. Note that when $p=2$, \eqref{eq:plap.multiplier} becomes linear problem \eqref{eq:lap}. To our knowledge no regularity results for Zaremba problem with possible degenerate weights are known both for the linear case~\eqref{eq:lap} and nonlinear case~\eqref{eq:plap.multiplier}. 

Let us start with several results concerning local and global results regarding Dirichlet problem for the linear weighted equations \eqref{eq:lap}. Fabes, Kenig and
Serapioni~\cite{FabKenSer82} studied the case of Dirichlet problem for linear weighted equation with degenerate coefficients $\mathbb{A}(x):\Omega\rightarrow \setR^{n\times n}_{\sym}$ such that
\begin{align}
  \label{eq:ass-fabes-kenig-serapioni}
  \Lambda^{-1} \mu(x) \abs{\xi}^2 \leq \skp{\bbA(x) \xi}{\xi} \leq
  \Lambda \mu(x) \abs{\xi}^2\quad(\xi\in\setR^n)
\end{align}
for some non-negative weight~$\mu$. We mention that \eqref{eq:ass-fabes-kenig-serapioni} is equivalent to say 
$\bbA(x)$ has a uniformly bounded condition number~$\Lambda^2$. The authors proved boundedness and H\"older continuity of the solution if~$\mu$ belongs to the Muckenhoupt class~$\mathcal{A}_2$ and $l$ is sufficiently regular. In \cite{CMP1}, the authors showed $|F|^q\mu\in L^{1}\Rightarrow |\nabla u|^q\mu\in L^{1}$ for all $q\in(1,\infty)$, where the conditions $\mu\in\mathcal{A}_2$ and a smallness assumption on a weighted $\setBMO$ norm of $\mathbb{A}$ were imposed, but still included the case $\mu(x)=|x|^{\pm\epsilon}\identity$ for small $\epsilon>0$. See also \cite{P1,CMP2} for the related results.

In the recent paper \cite{BalDieGioPas22}, the authors proved a slightly different type of gradient estimates like $(|F|\omega)^q\in L^{1}_{\loc}\Rightarrow (|\nabla u|\omega)^q\in L^{1}_{\loc}$ for each $q\in(1,\infty)$ with the assumptions \eqref{eq:M} and the smallness condition  
\begin{align*}%\label{eq:logBMOA}
[\log\mathbb{A}]_{\setBMO_{\loc}(\Omega)}=\sup_{B\Subset\Omega}\dashint_{B}\lvert\log\mathbb{A}(x)-\mean{\log\mathbb{A}}_{B}|\,dx\leq\frac{c}{q}
\end{align*}
for some $c=c(n,p,\Lambda)$, where $B$ denotes the ball in $\setR^n$ and  $\mean{f}_B=\frac{1}{|B|}\int_{B}f\,dx$ for an integrable function $f:\setR^n\rightarrow\setR^{n\times n}_{\sym}$. Here, since $\mathbb{A}$ is positive definite almost everywhere, the logarithm $\log\mathbb{A}:\setR^n\rightarrow\setR^{n\times n}_{\sym}$ is well-defined. This $\log$--$\setBMO$ condition for $\mathbb{A}$ also includes the case $\mathbb{A}(x)=|x|^{\pm\epsilon}\identity$ for small $\epsilon>0$. Moreover, the condition is optimal for integrability exponent $q$. In \cite{BBDL23}, Balci, Byun, Diening and Lee show the same result up to the boundary for the Dirichlet boundary value problem. In particular, that higher integrability of the gradients can be obtained by imposing a local small oscillation condition on the weight and a local small Lipschitz condition on the boundary of the domain.

In this paper we concentrate on Zaremba  problems \eqref{eq:lap} and~\eqref{eq:plap.multiplier}. Mixed boundary value problems admit less regularity properties for the general case of Zaremba problem.  Without extra assumptions on the angles between the subsets of boundary where the Dirichlet condition and Neumann condition are imposed, the only available regularity results are small higher integrability of the gradient (Meyers property), which means that~$\nabla u \in L^{p+\epsilon}(\Omega)$ for small $\epsilon>0$ in the nonlinear case, which is above the natural a-priori integrability.   In  \cite{AlkCheMaz22} Alkhutov, Chechkin and  Maz'ya obtained Meyers estimate for solutions to the linear {Z}aremba problem with uniformly elliptic coefficients , i.e.
$$\lambda_1 \abs{\xi}^2 \leq \skp{\bbA(x) \xi}{\xi} \leq \Lambda_1
\abs{\xi}^2$$ for all~$x \in \Omega$ with some $0<\lambda_1\leq\Lambda_1$, and  in
\cite{AlkChe22} Alkhutov and Chechkina obtained Meyers property for  unweigthed ~$p$-Laplace equation. See also   \cite{ADKC23}, \cite{AC23}, and \cite{AlkChe24} for the other contributions for different models including~$p(\cdot)$-Laplacian.    Due to the assumed uniform ellipticity in previous papers, the results above exclude the possibility of degenerate weights like
$\abs{x}^{\pm \epsilon} \identity$. In the present paper, we study the linear and nonlinear problems with degenerate weights that belong to the natural Muckenhoupt classes~$\mathcal{A}_p$ under the setting of the Zaremba problem, and obtain Meyers property for the solution. 

The method of the proof goes back to Maz'ya~\cite{Maz11} and \textcite{AlkCheMaz22}. It is based on the several general results including Sobolev-\Poincare inequality with conditions formulated in terms of capacity. In our paper, assuming \eqref{eq:ass-fabes-kenig-serapioni}, we prove such results for our weighted problem with $\mu(x)\in\mathcal{A}_p$, and then we show existence and uniqueness of the weak solution to our problems with the assumption \eqref{eq:cap0}, where the weighted capacity is defined in \eqref{eq:capa}. Then assuming scaling-invariant condition for capacity such as \eqref{eq:cen}, which in principle implies that $\mathcal{D}$ satisfies $\mu$-weighted essential subset of $Q_r$ and $\mathcal{D}$ is not negligible in the sense of $\mu$-weighted capacity, the following
\begin{align}\label{eq:result}
\abs{G}^{p+\delta}\mu\in L^{1}(\Omega)\quad\implies\quad \abs{\nabla u}^{p+\delta}\mu\in L^{1}(\Omega)
\end{align}
is shown for any $\delta\in[0,\delta_0]$ with small $\delta_0$, when the data in the right-hand side of the equation is expressed as $-\divergence(|G|^{p-2}\mathbb{A}(x)G)$. We also prove the analogous result to \eqref{eq:result} for the problem \eqref{eq:plap.multiplier} as follows:
\begin{align}\label{eq:result'}
\abs{\setM G}^{p+\delta}\in L^{1}(\Omega)\quad\implies\quad \abs{\setM\nabla u}^{p+\delta}\in L^{1}(\Omega)
\end{align}
when $l=-\divergence(|\setM G|^{p-2}\setM(x)^2G)$ for any $\delta\in[0,\delta_0]$ with small $\delta_0$.

The outline of this article is as follows. In Section~\ref{sec:2} we provide basic notations, definitions, and lemmas for our results. Section~\ref{sec:3} is organized as follows:
\begin{itemize}
	\item We prove weighted Sobolev-Poincar\'{e} inequality with sharp scaling-invariant capacity conditions.
	\item Then we introduce weighted problems and show existence results.
	\item Finally, we prove higher integrability estimates for the gradient of the solution.
\end{itemize}
In Section~\ref{sec:4}, we give analogous results for the weighted equations where the weights play roles as multipliers of the gradient of the solutions.

\section{preliminaries}\label{sec:2}

We use $c$ as generic constants which can vary from each line of the paper, and we clarify its dependence by using parenthesis, e.g., $c=c(n,p)$ means that $c$ depends on $n$ and $p$. For (real-valued) functions or numbers $f$ and $g$, we say $f\lesssim g$ when there exists a implicit constant $c$ such that $f\leq cg$ holds. We denote $f\eqsim g$ when both $f\lesssim g$ and $g\lesssim f$ holds, i.e., there exists a implicit constant $c\geq 1$ such that $\frac{1}{c}g\leq f\leq cg$ holds.
 
For $r>0$ and $x_0=(x_{0,1},\dots,x_{0,n})\in\setR^n$, we define the cube
\begin{align*}
Q_r(x_0)=\{x=(x_1,\dots,x_n)\in\setR^n:|x_i-x_{0,i}|<r\,\,\text{for each }i\}.
\end{align*}
Also, let us denote by $Q^+_r(x_0):=Q_r(x_0)\cap\setR^n_+$. We occasionally omit the center of the cube if it is not important in the context. We write the integral average of a measurable function $f:Q_r(x_0)\rightarrow \setR$ as
\begin{align*}
\mean{f}_{Q_r(x_0)}=\dashint_{Q_r(x_0)}f\,dx=\dfrac{1}{|Q_{r}(x_0)|}\int_{Q_{r}(x_0)}f\,dx.
\end{align*}

A function $\mu\in L^1_{\loc}(\setR^n)$ is called a weight if $\mu>0$ almost everywhere in $\setR^n$. For $1<p<\infty$ and an open set $U\subset\setR^n$, a weight $\mu\in L^{1}_{\loc}(\setR^n)$ is called an $\mathcal{A}_p(U)$-Muckenhoupt weight, i.e., $\mu\in\mathcal{A}_p(U)$, if
\begin{align*}
[\mu]_{\mathcal{A}_p(U)}:= \sup_{Q_r(x_0)\subset U}\left(\dashint_{Q_r(x_0)}\mu\,dx\right)\left(\dashint_{Q_r(x_0)}\mu^{-\frac{1}{p-1}}\,dx\right)^{p-1}<\infty.
\end{align*}
We remark that the $\mathcal{A}_p$-Muckenhoupt weight is originally defined with the ball $B_r(x_0)$ instead of $Q_r(x_0)$, but since $B_r(x_0)\subset Q_r(x_0)\subset B_{\sqrt{2}r}(x_0)$, $|B_r(x_0)|\eqsim |Q_r(x_0)| \eqsim |B_{\sqrt{2}r}(x_0)|$ and $\mu>0$ a.e., our definition written with $Q_r(x_0)$ is equivalent to the original one. With $Q_{r}\subset U$, we write 
\begin{align*}
\mu(Q_r)=\int_{Q_r}\mu(x)\,dx.
\end{align*}
Now we list some basic properties when $\mu$ is an~$\mathcal{A}_p(U)$-Muckenhoupt weight.
\begin{itemize}
\item The maximal operator~$M$ is bounded on $L^p(U;\mu dx)$ (see \cite{Muc72}), where $L^p(U;\mu dx)$ is the completion of functions which are infinitely differentiable in $\overline{U}$ endowed with
\begin{align*}
\norm{v}_{L^{p}(U;\mu dx)}=\left(\int_{U}|v|^p\mu\,dx\right)^{\frac{1}{p}}.
\end{align*}
\item By \cite[Proposition 7.2.8]{Gra14class}, for $U\subset Q$ with any cube $Q\subset\setR^n$ we have
\begin{align}\label{eq:UB}
\dfrac{\mu(U)}{\mu(Q)}\leq c\left(\dfrac{|U|}{|Q|}\right)^{\delta}
\end{align}
for some $\delta=\delta(n,p,[\mu]_{\mathcal{A}_p})\in(0,1)$ and $c=c(n,p,[\mu]_{\mathcal{A}_p})\geq 1$.
\item Moreover, by \cite[Lemma 3.5]{MenPhu12}, there holds
\begin{align}\label{eq:BU}
\dfrac{\mu(Q)}{\mu(U)}\leq[\mu]_{\mathcal{A}_p}\left(\dfrac{|Q|}{|U|}\right)^p
\end{align}
for $U\subset Q$ with $|U|>0$.
\item By \cite[Lemma 3.6]{MenPhu12}, there exists $\gamma_1=\gamma_1(n,p,[\mu]_{\mathcal{A}_p})\in(1,p)$ such that  
\begin{align}\label{eq:mu.gamma}
[\mu]_{\mathcal{A}_{\gamma}}\leq c(n,p,[\mu]_{\mathcal{A}_p})
\end{align}
for any $\gamma\in[\gamma_1,p]$. 
\end{itemize}
For the further studies of Muckenhoupt weight, we refer to \cite[Chapter 7]{Gra14class}.

Now, we introduce Lipschitz domain.
\begin{definition}\label{def:lip}
	Let two positive parameters $L$ and $R$ be given. The domain $\Omega$ is called $(L,R)$--Lipschitz if for any $x_0\in\partial\Omega$, we can find a coordinate system $\{x_1,\dots,x_n\}$ and Lipschitz map $\psi:\setR^{n-1}\rightarrow\setR$ such that $x_0$ is the origin in this new coordinate system, and we have
	\begin{align}\label{eq:lip1}
	\Omega\cap Q_R(x_0)=\{x=(x_1,\dots,x_n)=(x',x_n)\in Q_R(x_0):x_n>\psi(x')\}
	\end{align}
	together with
	\begin{align}\label{eq:psi}
	\|\nabla\psi\|_{\infty}\leq L.
	\end{align}
\end{definition}
One of the good property of an Lipschitz domain is that Lipschitz domain is an extension domain for not only Sobolev spaces but also weighted Sobolev spaces (See \cite{Chu92}). This property is used for Corollary \ref{cor:SP} and Corollary \ref{cor:SP'}, where derive the existence of the weak solution to our problem in consideration.

Let $1<p<\infty$, $\Omega\subset\setR^n$ be a bounded $(L,R)$--Lipschitz domain with $n\geq 2$, $\mathcal{D}\subset\partial\Omega$ be a closed set and $\mu\in \mathcal{A}_p(\setR^n)$. 
We denote by $W^{1,p}(\Omega,\mathcal{D};\mu dx)$ the completion of functions which are infinitely differentiable in $\overline{\Omega}$ and equal to zero in a neighborhood of $\mathcal{D}$ with the norm
\begin{align*}
\norm{v}_{W^{1,p}(\Omega,\mathcal{D};\mu dx)}=\left(\int_{\Omega}|v|^p\mu\,dx+\int_{\Omega}|\nabla v|^p\mu\,dx\right)^{\frac{1}{p}}.
\end{align*}
Note that it is possible to define $W^{1,p}(\Omega,\mathcal{D};\mu dx)$ in this way since smooth functions are dense in weighted Sobolev spaces with Muckenhoupt weights.

For an open set $U\subset\setR^n$, $K_0$ being closed subset of $U$, $\mu\in\mathcal{A}_q$ for $q\geq 1$, let us define the weighted capacity which will be frequently used throughout the paper as follows:
\begin{align}\label{eq:capa}
\textrm{Cap}_{q,\mu}(K_0,U):=\inf\left\{\int_{U}|\nabla\phi|^q\mu\,dx:\phi\in C^{\infty}_0(U),\,\,\phi\geq 1\,\,\text{on}\,\, K_0\right\}.
\end{align}

Let us mention that there is a fundamental relation between the capacity of a set  and validity of the Sobolev-Poincar\'{e} inequality. Here we record some results which go back to classic works by Maz'ya, see \cite[Chapters 2,13,14]{Maz11} and \cite{HeiKilMar06} and references therein.

We now give weighted Sobolev-Poincar\'{e} inequality with the appropriate dependency to the universal constants for the later use.
\begin{lemma}\label{lem:SP0}
For $1<p<\infty$ and $\mu(\cdot)\in\mathcal{A}_p$, there exist $c=c(n,p,[\mu]_{\mathcal{A}_p})\geq 1$ and $q_0=q_0(n,p,[\mu]_{\mathcal{A}_p})<p$ such that 
\begin{align}\label{eq:SP0}
\left(\dfrac{1}{\mu(Q_r)}\int_{Q_r}\left|\dfrac{u}{r}\right|^p\mu\,dx\right)^{\frac{1}{p}}\leq c \left(\dfrac{1}{\mu(Q_r)}\int_{Q_r}|\nabla u|^q\mu\,dx\right)^{\frac{1}{q}}
\end{align}
for any $q\in[q_0,p]$, $Q_r\subset\setR^n$ and $u\in W^{1,q}_{0,\mu}(Q_r)$. Moreover, we have
\begin{align}\label{eq:SP1}
\left(\dfrac{1}{\mu(Q_r)}\int_{Q_r}\left|\dfrac{u-A}{r}\right|^p\mu\,dx\right)^{\frac{1}{p}}\leq c \left(\dfrac{1}{\mu(Q_r)}\int_{Q_r}|\nabla u|^q\mu\,dx\right)^{\frac{1}{q}},
\end{align}
where $A=\mean{u}_{Q_r}$ or $A=\frac{1}{\mu(Q_r)}\int_{Q_r}u(x)\mu\,dx$.
\end{lemma}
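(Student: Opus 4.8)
The plan is to reduce the weighted Sobolev--Poincar\'e inequality \eqref{eq:SP0}--\eqref{eq:SP1} to the classical (unweighted) Sobolev--Poincar\'e inequality on cubes together with the self-improving properties of Muckenhoupt weights already recorded in \eqref{eq:UB}, \eqref{eq:BU} and \eqref{eq:mu.gamma}. The key observation is that for $\mu\in\mathcal{A}_p$ there is an exponent $\sigma=\sigma(n,p,[\mu]_{\mathcal{A}_p})>1$ such that $\mu\in\mathcal{A}_p$ satisfies a reverse H\"older inequality, and hence $\mu\,dx$ is a doubling measure supporting a $(1,q)$-Poincar\'e inequality for some $q<p$; this is exactly the content of Maz'ya's and Heinonen--Kilpel\"ainen--Martio's theory cited before the statement. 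Concretely, I would first fix the exponent: by \eqref{eq:mu.gamma} pick $\gamma_1\in(1,p)$ with $[\mu]_{\mathcal{A}_\gamma}\le c(n,p,[\mu]_{\mathcal{A}_p})$ for all $\gamma\in[\gamma_1,p]$, and then invoke the weighted Sobolev inequality for $\mathcal{A}_\gamma$ weights (Fabes--Kenig--Serapioni, or \cite[Chapter~15]{Maz11}), which gives a gain in integrability: there exist $\kappa=\kappa(n,p,[\mu]_{\mathcal{A}_p})>1$ and $q_0<p$ so that
\begin{align*}
\left(\dfrac{1}{\mu(Q_r)}\int_{Q_r}\left|\dfrac{u}{r}\right|^{\kappa q}\mu\,dx\right)^{\frac{1}{\kappa q}}\le c\left(\dfrac{1}{\mu(Q_r)}\int_{Q_r}|\nabla u|^{q}\mu\,dx\right)^{\frac{1}{q}}
\end{align*}
for $u\in W^{1,q}_{0,\mu}(Q_r)$ and $q\in[q_0,p]$. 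Choosing $q_0$ so that additionally $\kappa q_0\ge p$, interpolation (or simply H\"older in the measure $\mu\,dx/\mu(Q_r)$, which has total mass $1$) absorbs the left side down to the $L^p(\mu)$-average, yielding \eqref{eq:SP0}.

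For \eqref{eq:SP1} the standard device is to extend: since $u$ need not vanish on $\partial Q_r$, one cannot apply \eqref{eq:SP0} directly, so instead I would use the weighted $(1,q)$-Poincar\'e inequality for the ball/cube with the measure $\mu\,dx$ and then self-improve it to a $(\kappa q,q)$ inequality via the Sobolev-type gain, exactly as in the Dirichlet case; alternatively, one reduces to \eqref{eq:SP0} by a reflection/cutoff argument on a doubled cube $Q_{2r}$, replacing $u$ by $(u-A)\eta$ with a Lipschitz cutoff $\eta$ equal to $1$ on $Q_r$, controlling the extra term $\frac{1}{r}\|u-A\|$ by a Caccioppoli-free iteration. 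The cleanest route is to cite the weighted Poincar\'e inequality on cubes for $\mathcal{A}_\gamma$ weights directly (this is classical and the constant depends only on $n,\gamma,[\mu]_{\mathcal{A}_\gamma}$, hence on $n,p,[\mu]_{\mathcal{A}_p}$ after \eqref{eq:mu.gamma}), then improve the exponent on the left from $q$ to $p$ as above. Both choices $A=\mean{u}_{Q_r}$ and $A=\frac{1}{\mu(Q_r)}\int_{Q_r}u\,\mu\,dx$ are admissible because the two averages differ by a quantity controlled by the right-hand side: $|\mean{u}_{Q_r}-\tfrac{1}{\mu(Q_r)}\int u\,\mu|$ is estimated using \eqref{eq:BU} (comparing $|Q_r|$ and $\mu(Q_r)$) together with the version of \eqref{eq:SP1} for whichever average one proves first; so it suffices to establish it for one of them.

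The main obstacle is bookkeeping the dependence of all constants \emph{only} on $n$, $p$ and $[\mu]_{\mathcal{A}_p}$ and not on $r$ or on the center $x_0$ — i.e., genuine scaling invariance. Scaling $x\mapsto x_0+r x$ turns $\mu$ into $\mu_r(x)=\mu(x_0+rx)$ with $[\mu_r]_{\mathcal{A}_p}=[\mu]_{\mathcal{A}_p}$ and reduces everything to the unit cube, so the only real point is that the self-improvement exponents $\gamma_1$, $\kappa$ and $q_0$ produced by the $\mathcal{A}_p$-theory are functions of $[\mu]_{\mathcal{A}_p}$ alone; this is guaranteed by the quantitative open-ended/reverse-H\"older property of Muckenhoupt weights. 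A secondary technical point is the density/definition of $W^{1,q}_{0,\mu}(Q_r)$ and $W^{1,p}(\Omega,\mathcal D;\mu\,dx)$: since smooth functions are dense in weighted Sobolev spaces with $\mathcal{A}_p$ weights (as noted in the preliminaries), it is enough to prove the inequalities for $u\in C_0^\infty(Q_r)$ (resp.\ $u\in C^\infty(\overline{Q_r})$) and pass to the limit, so no subtlety there. I expect the write-up to be short: state the scaling reduction, cite the $\mathcal{A}_\gamma$-weighted Sobolev/Poincar\'e inequality on the unit cube with the exponent gain, apply \eqref{eq:mu.gamma} to fix constants, and finish with a one-line H\"older absorption to reach the exponent $p$ on the left-hand side.
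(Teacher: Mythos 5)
Your plan coincides with the paper's: both rest on the Fabes--Kenig--Serapioni weighted Sobolev inequality (with its Sobolev gain $\kappa>1$) together with the quantitative $\mathcal{A}_\gamma$ self-improvement \eqref{eq:mu.gamma}, choosing $q_0$ close enough to $p$ so that the gain brings the left exponent at least to $p$ (the paper's $q_0=\max\{\tfrac{n-1}{n}p,\tfrac{p+\gamma_1}{2}\}$ is exactly your $\kappa q_0\ge p$ with $\kappa=\tfrac{n}{n-1}$), after which Jensen against the probability measure $\mu\,dx/\mu(Q_r)$ yields \eqref{eq:SP0}, and \eqref{eq:SP1} follows from the FKS mean-oscillation version. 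The only thing the paper spells out that you treat as a black box is the uniform-in-$q$ boundedness of the fractional maximal operator and Riesz potential on weighted spaces (\eqref{eq:bddT}, \eqref{eq:bddT2}), which is precisely what secures the dependence of all constants on $n,p,[\mu]_{\mathcal{A}_p}$ alone.
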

\begin{proof}
The proof is based on \cite{FabKenSer82} together with \cite{MucWhe74}. One of the differences compared to the mentioned literatures is that we use the cube $Q_r$ instead of the ball $B_r$, but in principle the proof will be same. First, similar to \cite[Theorem 1]{MucWhe74} and \cite[Page 84]{FabKenSer82}, we see that the following is true: For $1<p<\infty$ and $\mu\in\mathcal{A}_p$, there exists a constant $c=c(n,p,[\mu]_{\mathcal{A}_p})>0$ such that for all measurable $f:\setR^n\rightarrow\setR$,
\begin{align}\label{eq:bddT}
\int_{\setR^n}\left|\int_{\setR^n}\dfrac{f(y)}{|x-y|^{n-1}}\,dy\right|^p\mu(x)\,dx\leq c\int_{\setR^n}|Tf(x)|^p\mu(x)\,dx
\end{align} 
holds, where
\begin{align*}
Tf(x)=\sup_{0<s<\infty}\dfrac{1}{s^{n-1}}\int_{Q_s(x)}|f(y)|\,dy.
\end{align*}
Indeed, using \eqref{eq:UB} and following the argument of \cite[Page 263--265]{MucWhe74}, we obtain \eqref{eq:bddT}.

On the other hand, similar to \cite[Lemma 1.1]{FabKenSer82}, the following holds true: There exists $c=c(n,p,[\mu]_{\mathcal{A}_p})\geq 1$ and $q_1=q_1(n,p,[\mu]_{\mathcal{A}_p})\in(1,p)$ such that
\begin{align}\label{eq:bddT2}
\left(\dfrac{1}{\mu(Q_r)}\int_{Q_r}(Tf)^p\mu(x)\,dx\right)^{\frac{1}{p}}\leq cr\left(\dfrac{1}{\mu(Q_r)}\int_{Q_r}|f|^q\mu(x)\,dx\right)^{\frac{1}{q}}
\end{align}
for $f:\setR^n\rightarrow \setR$ with $f\in L^q_{\mu}(Q_r)$ supported in the ball $Q_r$ and any $q\in[q_1,p]$. Indeed, with $\gamma_1$ in \eqref{eq:mu.gamma} we set
\begin{align*}
q_0=\max\left\{\frac{n-1}{n}p,\frac{p+\gamma_1}{2}\right\}\in(\gamma_1,p).
\end{align*}
Then following the same argument of the proof of \cite[Lemma 1.1]{FabKenSer82}, together with the fact that
\begin{align}\label{eq:doubling}
\dfrac{\mu(Q_r)}{\mu(E)}\leq [\mu]_{\mathcal{A}_{q}}\left(\dfrac{|Q_r|}{|E|}\right)^q\leq  [\mu]_{\mathcal{A}_{\gamma_1}}\left(\dfrac{|Q_r|}{|E|}\right)^q\leq c(n,p,[\mu]_{\mathcal{A}_p})\left(\dfrac{|Q_r|}{|E|}\right)^q
\end{align}
for any subset $E\subset Q_r$, we have \eqref{eq:bddT2}.

Now note that for any $\gamma\in[\gamma_1,p]$, $\mu\in \mathcal{A}_{\gamma}$ implies $\mu^{-\frac{1}{\gamma-1}}\in\mathcal{A}_{\frac{\gamma}{\gamma-1}}$ and so by \cite[Remark 7.2.3 and Proposition 7.2.8]{Gra14class}, we have
\begin{align*}
\dfrac{(\mu^{-\frac{1}{\gamma-1}})(E)}{(\mu^{-\frac{1}{\gamma-1}})(Q_r)}\leq c(n,p,[\mu]_{\mathcal{A}_p})\left(\dfrac{|E|}{|Q_r|}\right)^{\delta}
\end{align*}
with $E\subset Q_r$. Then following the argument of the proof of \cite[Theorem 1.2]{FabKenSer82} along with the above inequality, \eqref{eq:bddT} and \eqref{eq:bddT2}, we obtain \eqref{eq:SP0}. Now \eqref{eq:SP1} follows from the same argument of the proof of \cite[Theorem 1.5]{FabKenSer82}. 
\end{proof}

We also give weighted Sobolev-Poincar\'{e} inequality with the weight as a multiplier for appropriate dependency.

\begin{lemma}\label{lem:SP0'}
	For $1<p<\infty$ and $\mu(\cdot)^p\in\mathcal{A}_p$, there exist $c=c(n,p,[\mu^p]_{\mathcal{A}_p})\geq 1$ and $q_1=q_1(n,p,[\mu^p]_{\mathcal{A}_p})<p$ such that 
	\begin{align}\label{eq:SP0'}
	\left(\dashint_{Q_r}\left|\dfrac{u}{r}\right|^p\mu^p\,dx\right)^{\frac{1}{p}}\leq c \left(\dashint_{Q_r}|\nabla u|^q\mu^q\,dx\right)^{\frac{1}{q}}
	\end{align}
	for any $q\in[q_1,p]$, $Q_r\subset\setR^n$ and $u\in W^{1,q}_{0,\mu}(Q_r)$. Moreover, we have
	\begin{align}\label{eq:SP1'}
	\left(\dashint_{Q_r}\left|\dfrac{u-\mean{u}_{Q_r}}{r}\right|^p\mu^p\,dx\right)^{\frac{1}{p}}\leq c \left(\dashint_{Q_r}|\nabla u|^q\mu^q\,dx\right)^{\frac{1}{q}}.
	\end{align}
\end{lemma}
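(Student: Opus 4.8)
The plan is to reduce \eqref{eq:SP0'} and \eqref{eq:SP1'} to the single-weight inequalities already proved in Lemma~\ref{lem:SP0}, applied to the weight $\omega:=\mu^p\in\mathcal{A}_p$, and then to repair the discrepancy between the powers $\mu^p$ and $\mu^q$ by H\"older's inequality together with the reverse-H\"older self-improvement of $\omega$. By density of $C^\infty_0(Q_r)$ in the relevant weighted Sobolev space it suffices to argue for $u\in C^\infty_0(Q_r)$; note that $\mu^q=(\mu^p)^{q/p}\le\mu^p+1$ is locally integrable, so every integral below is finite. As the only ingredient from Muckenhoupt theory beyond Lemma~\ref{lem:SP0} that I will need, recall that $\omega=\mu^p\in\mathcal{A}_p\subset\mathcal{A}_\infty$ satisfies a reverse-H\"older inequality (see \cite[Chapter~7]{Gra14class}): there is $\sigma=\sigma(n,p,[\mu^p]_{\mathcal{A}_p})>1$ with $\big(\dashint_{Q_r}\mu^{p\sigma}\,dx\big)^{1/\sigma}\le c\dashint_{Q_r}\mu^p\,dx$ for every cube $Q_r$; combined with Jensen's inequality this yields
\begin{align*}
\dashint_{Q_r}\mu^s\,dx\eqsim\Big(\dashint_{Q_r}\mu^p\,dx\Big)^{s/p}\qquad\text{for all }s\in[p,p\sigma],
\end{align*}
with implicit constants depending only on $n,p,[\mu^p]_{\mathcal{A}_p}$. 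I fix the auxiliary exponent $\hat q:=q_0$, where $q_0=q_0(n,p,[\mu^p]_{\mathcal{A}_p})<p$ is the exponent produced by Lemma~\ref{lem:SP0} for the weight $\mu^p$, and I set $q_1:=\tfrac{p\sigma q_0}{q_0+p(\sigma-1)}$, which lies in $(q_0,p)$; this is the threshold in the statement.

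Now fix $q\in[q_1,p]$. Applying \eqref{eq:SP0} of Lemma~\ref{lem:SP0} to $\mu^p$ with the exponent $\hat q\in[q_0,p]$, and writing each average $\frac1{\mu^p(Q_r)}\int_{Q_r}(\cdot)\,\mu^p\,dx$ as $\big(\dashint_{Q_r}\mu^p\,dx\big)^{-1}\dashint_{Q_r}(\cdot)\,\mu^p\,dx$, gives
\begin{align*}
\left(\dashint_{Q_r}\left|\frac ur\right|^p\mu^p\,dx\right)^{1/p}\le c\left(\dashint_{Q_r}\mu^p\,dx\right)^{\frac1p-\frac1{\hat q}}\left(\dashint_{Q_r}|\nabla u|^{\hat q}\mu^p\,dx\right)^{1/\hat q}.
\end{align*}
Since $\hat q=q_0<q$, H\"older's inequality with exponents $\tfrac q{\hat q}$ and $\tfrac q{q-\hat q}$, applied to the splitting $|\nabla u|^{\hat q}\mu^p=(|\nabla u|\mu)^{\hat q}\,\mu^{p-\hat q}$, produces
\begin{align*}
\left(\dashint_{Q_r}|\nabla u|^{\hat q}\mu^p\,dx\right)^{1/\hat q}\le\left(\dashint_{Q_r}|\nabla u|^q\mu^q\,dx\right)^{1/q}\left(\dashint_{Q_r}\mu^\tau\,dx\right)^{\frac{q-\hat q}{q\hat q}},\qquad\tau:=\frac{q(p-\hat q)}{q-\hat q}.
\end{align*}

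A direct computation gives $\tau-p=\tfrac{\hat q(p-q)}{q-\hat q}\ge0$, so $\tau\ge p$; moreover $\tau$ is a decreasing function of $q$ with value $p$ at $q=p$, and the definition of $q_1$ is precisely the condition making $\tau\le p\sigma$ for all $q\in[q_1,p]$. Hence $\dashint_{Q_r}\mu^\tau\,dx\eqsim\big(\dashint_{Q_r}\mu^p\,dx\big)^{\tau/p}$, and combining the last three displays the total power of $\dashint_{Q_r}\mu^p\,dx$ carries the exponent
\begin{align*}
\frac1p-\frac1{\hat q}+\frac\tau p\cdot\frac{q-\hat q}{q\hat q}=\frac1p-\frac1{\hat q}+\frac{p-\hat q}{p\hat q}=0,
\end{align*}
so the correction factor is $\eqsim1$ and \eqref{eq:SP0'} follows. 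The inequality \eqref{eq:SP1'} is then obtained by running the same three steps with $u$ replaced by $u-\mean{u}_{Q_r}$ and with \eqref{eq:SP1} of Lemma~\ref{lem:SP0} (for the weight $\mu^p$, taking $A=\mean{u}_{Q_r}$ there) in place of \eqref{eq:SP0}; nothing else changes, since the remaining steps only touch $\nabla u$ and the weight.

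The main obstacle --- essentially the only non-routine point --- is the bookkeeping in the third paragraph: one must choose $\hat q$ and $q_1$ so that the power $\tau$ of $\mu$ produced by H\"older stays inside the reverse-H\"older window $[p,p\sigma]$ of $\mu^p$, and then verify that the powers of $\dashint_{Q_r}\mu^p\,dx$ cancel exactly. That this cancellation must occur is in fact dictated by the scaling invariance of \eqref{eq:SP0'}; granting it, everything else is a transcription of the proof of Lemma~\ref{lem:SP0}.
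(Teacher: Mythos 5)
Your argument is correct, and it takes a genuinely different and more self-contained route than the paper's. The paper handles Lemma~\ref{lem:SP0'} by citing \cite[Proposition 2.2]{BBDL23} and \cite[Proposition 3]{BalDieGioPas22} essentially as black boxes (after invoking a reverse-H\"older estimate for the dual weight $\mu^{-p'}$), whereas you reduce the multiplier-weight inequality directly to the already-proved Lemma~\ref{lem:SP0} applied to $\omega=\mu^p$, then repair the $\mu^p$-versus-$\mu^q$ mismatch via H\"older with conjugate pair $(q/\hat q,\, q/(q-\hat q))$ and the reverse-H\"older self-improvement of $\omega\in\mathcal{A}_\infty$. The bookkeeping checks out: $\tau=\tfrac{q(p-\hat q)}{q-\hat q}$ does satisfy $\tau\ge p$ with equality at $q=p$, your choice $q_1=\tfrac{p\sigma q_0}{q_0+p(\sigma-1)}\in(q_0,p)$ is exactly the threshold keeping $\tau\le p\sigma$, and the exponents of $\dashint_{Q_r}\mu^p\,dx$ cancel to zero, as scaling invariance requires. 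The dependence $q_1=q_1(n,p,[\mu^p]_{\mathcal{A}_p})$ and $c=c(n,p,[\mu^p]_{\mathcal{A}_p})$ is as claimed. What your route buys is that the lemma now follows from Lemma~\ref{lem:SP0} plus two standard facts about $\mathcal{A}_p$ weights, making the chain of dependencies transparent inside the paper; the cost is a slightly longer argument and the need to track the auxiliary exponent $\tau$, which the paper avoids by outsourcing to \cite{BBDL23,BalDieGioPas22}.
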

\begin{proof}
By \cite[Theorem 7.2.2]{Gra14class} there exists $\theta=\theta(n,p,[\mu^p]_{\mathcal{A}_p})\in(0,1)$ such that 
\begin{align*}
\left(\dashint_{Q_{r}}\mu^{-(\theta p)'}\,dx\right)^{\frac{1}{(\theta p)'}}\leq c\left(\dashint_{Q_{r}}\mu^{- p'}\,dx\right)^{\frac{1}{p'}}
\end{align*}
for some $c=c(n,p,[\mu^p]_{\mathcal{A}_p})$.
Using this, the inequality \eqref{eq:SP0'} can be proved by \cite[Proposition 2.2]{BBDL23} with the covering argument, and \eqref{eq:SP1'} can be shown by \cite[Proposition 3]{BalDieGioPas22}.
\end{proof}

\section{Zaremba problem with degenerate weights as measures}\label{sec:3}

This section is devoted to establishing the weighted equation where the weights are considered as measures. First of all, we prove optimal Sobolev-Poincar\'{e} inequality where the relevant constant is proportional to a scaling-invariant quantity involving the weighted capacity as in \eqref{eq:cap} and \eqref{eq:cap'}. Then we introduce precise settings of our problem, and show the existence and uniqueness result by assuming that the portion of the boundary where the Dirichlet condition is imposed has positive weighted capacity. 

Moreover, assuming the additional condition using the weighted capacity, a higher integrability result is proved. Finally, we give an nontrivial example of the set which satisfies our weighted capacity conditions.

\subsection{Weighted Sobolev-Poincar\'{e} inequality with optimal constants}\label{sec:3.1}
In this subsection, we prove weighted Sobolev-Poincar\'{e} inequality for our problem, together with finding the scaling-invariant quantities comparable to the optimal constants of the inequality. We also show that the aforementioned comparability is unavoidable in general. To do this, we first define the following notion for subsets in $\overline{Q}_r\subset\setR^n$.
\begin{definition}
Let $K$ be a closed subset of $\overline{Q}_r$,	$\gamma\in(0,\frac{1}{2})$ and $q>1$ be chosen numbers.
\begin{itemize}
	\item We say $K$ is a $\mu$-weighted ($\gamma$,$q$)-negligible subset of $\overline{Q}_r$, if
	\begin{align}\label{eq:neg}
	\dfrac{\textrm{Cap}_{q,\mu}(K,Q_{2r})}{\mu(Q_{2r})}\leq \gamma r^{-q}.
	\end{align}
	The collection of all $\mu$-weighted ($\gamma$,$q$)-negligible subset of $\overline{Q}_r$ is called $\mathcal{N}_{\mu;\gamma,q}(Q_r)$. 
	\item If \eqref{eq:neg} fails for the set $K$, then we say $K$ is a $\mu$-weighted ($\gamma$,$q$)-essential subset of $\overline{Q}_r$.
\end{itemize}
\end{definition}
Now we give the following proposition inspired by \cite[Section 14.1.2]{Maz11}.

\begin{proposition}\label{prop:SP}
Let $K$ be a closed subset of $\overline{Q}_r$ and $\mu\in\mathcal{A}_p(\setR^n)$ with $1<p<\infty$.
\begin{enumerate}
\item There exists $p_0=p_0([\mu]_{\mathcal{A}_p})\in(1,p)$ such that for each $q\in [p_0,p]$,
\begin{align}\label{eq:wSP}
\left(\dfrac{1}{\mu(Q_r)}\int_{Q_r}\left|\dfrac{u}{r}\right|^p\mu\,dx\right)^{\frac{1}{p}}\leq C\left(\dfrac{1}{\mu(Q_r)}\int_{Q_r}|\nabla u|^q\mu\,dx\right)^{\frac{1}{q}}
\end{align}
holds for all $u\in C^{\infty}(\bar{Q}_r)$ with $\text{dist}(\text{supp}\,u,K)>0$. Moreover, $C$ satisfies the following estimate
\begin{align}\label{eq:cap}
C\leq \frac{c_2}{r} \left(\dfrac{\mu(Q_{2r})}{\text{Cap}_{q,\mu}(K,Q_{2r})}\right)^{\frac{1}{q}}
\end{align}
for some $c_2=c_2(n,p,[\mu]_{\mathcal{A}_p})$.
\item For any $u\in C^{\infty}(\bar{Q}_r)$ with $\text{dist}(\text{supp}\,u,K)>0$, suppose that
\begin{align}\label{eq:wSP'}
\left(\dfrac{1}{\mu(Q_{\frac{r}{2}})}\int_{Q_{\frac{r}{2}}}\left|\frac{u}{r}\right|^p\mu\,dx\right)^{\frac{1}{p}}\leq C\left(\dfrac{1}{\mu(Q_{r})}\int_{Q_r}|\nabla u|^q\mu\,dx\right)^{\frac{1}{q}}
\end{align}
holds for some $q\in[p_0,p]$. Then there exists a small $\gamma=\gamma(n,p,[\mu]_{\mathcal{A}_p})\leq \frac{1}{2}$ such that if $K\in \mathcal{N}_{\mu;\gamma,q}(Q_r)$, we have
\begin{align}\label{eq:cap'}
C\geq \frac{c_3}{r} \left(\dfrac{\mu(Q_{2r})}{\text{Cap}_{q,\mu}(K,Q_{2r})}\right)^{\frac{1}{q}}
\end{align}
with $c_3=c_3(n,p,[\mu]_{\mathcal{A}_p})$.
\end{enumerate}
\end{proposition}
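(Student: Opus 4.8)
Both estimates would be derived from the weighted Sobolev--Poincar\'e inequality of Lemma~\ref{lem:SP0} by a Maz'ya-type capacitary test-function argument. First I would fix $p_0:=q_0$, the exponent furnished by Lemma~\ref{lem:SP0}; since $q_0>\gamma_1$ with $\gamma_1$ as in~\eqref{eq:mu.gamma}, for every $q\in[p_0,p]$ one has $\mu\in\mathcal{A}_q$ with $[\mu]_{\mathcal{A}_q}\le c(n,p,[\mu]_{\mathcal{A}_p})$, and both \eqref{eq:SP0} and \eqref{eq:SP1} are available. Two facts will be used repeatedly: the doubling bound $\mu(Q_{2r})\eqsim\mu(Q_r)$, and the trivial capacity estimate
\begin{align}\label{eq:captrivial}
\textrm{Cap}_{q,\mu}(K,Q_{2r})\le c\,r^{-q}\,\mu(Q_{2r}),
\end{align}
obtained by testing \eqref{eq:capa} with a cut-off $\eta\in C_0^\infty(Q_{2r})$ that equals $1$ on a neighbourhood of $\overline{Q}_r\supset K$ and satisfies $\abs{\nabla\eta}\le c/r$.

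\textbf{Item (1).} Put $A:=\mean{u}_{Q_r}$. By \eqref{eq:SP1} and the triangle inequality, the left-hand side of \eqref{eq:wSP} is bounded by $c\bigl(\tfrac1{\mu(Q_r)}\int_{Q_r}\abs{\nabla u}^q\mu\dx\bigr)^{1/q}+\tfrac{\abs{A}}{r}$, so in view of \eqref{eq:captrivial} (which forces $r\le c(\mu(Q_r)/\textrm{Cap}_{q,\mu}(K,Q_{2r}))^{1/q}$) and $\mu(Q_r)\eqsim\mu(Q_{2r})$ it suffices to establish the capacitary bound
\begin{align}\label{eq:meanbound}
\abs{A}\le c\left(\frac{\mu(Q_r)}{\textrm{Cap}_{q,\mu}(K,Q_{2r})}\right)^{1/q}\left(\frac1{\mu(Q_r)}\int_{Q_r}\abs{\nabla u}^q\mu\dx\right)^{1/q}.
\end{align}
This is trivial if $A=0$; otherwise I would build an admissible competitor for $\textrm{Cap}_{q,\mu}(K,Q_{2r})$ out of $u$. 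Set $v:=A-u$, so $v\equiv A$ on a neighbourhood of $K$ in $\overline{Q}_r$; extend $v$ to $\widetilde{v}$ on $\setR^n$ by the weighted Sobolev extension for the cube and the weight $\mu\in\mathcal{A}_q$ (see~\cite{Chu92}), rescaled so that $\int_{\setR^n}\abs{\nabla\widetilde{v}}^q\mu\dx+r^{-q}\int_{\supp\eta}\abs{\widetilde{v}}^q\mu\dx\le c\bigl(\int_{Q_r}\abs{\nabla v}^q\mu\dx+r^{-q}\int_{Q_r}\abs{v}^q\mu\dx\bigr)$; and set $\phi:=A^{-1}\eta\,\widetilde{v}$. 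Since the extension is local and $v\equiv A$ near $K$, we have $\widetilde{v}\equiv A$ near $K$, hence $\phi\equiv1$ there, so $\phi\in C_0^\infty(Q_{2r})$ is admissible in \eqref{eq:capa}. Expanding $\nabla\phi=A^{-1}(\widetilde{v}\nabla\eta+\eta\nabla\widetilde{v})$ and using $\abs{\nabla\eta}\le c/r$, $\nabla v=-\nabla u$ and the extension bound,
\begin{align*}
\textrm{Cap}_{q,\mu}(K,Q_{2r})\le\int_{Q_{2r}}\abs{\nabla\phi}^q\mu\dx\le\frac{c}{\abs{A}^q}\left(r^{-q}\int_{Q_r}\abs{u-A}^q\mu\dx+\int_{Q_r}\abs{\nabla u}^q\mu\dx\right).
\end{align*}
Finally I would absorb the first term: \eqref{eq:SP1} together with H\"older's inequality ($q\le p$) yields $\int_{Q_r}\abs{u-A}^q\mu\dx\le c\,r^q\int_{Q_r}\abs{\nabla u}^q\mu\dx$, whence the bracket is $\le c\int_{Q_r}\abs{\nabla u}^q\mu\dx$, and rearranging gives \eqref{eq:meanbound} and thus \eqref{eq:cap}.

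\textbf{Item (2).} Here I would argue in the reverse direction, inserting into \eqref{eq:wSP'} a test function built from a near-minimiser of the capacity. Fix the exponent $q$ for which \eqref{eq:wSP'} is assumed and choose $\phi\in C_0^\infty(Q_{2r})$ with $\phi\ge1$ on $K$ and $\int_{Q_{2r}}\abs{\nabla\phi}^q\mu\dx\le2\,\textrm{Cap}_{q,\mu}(K,Q_{2r})$, and apply \eqref{eq:wSP'} to $u:=(1-2\phi)_+$ (mollified at a small scale so as to lie in $C^\infty(\overline{Q}_r)$ and still vanish near $K$, which is legitimate since $u\equiv0$ on the open neighbourhood $\{\phi>1/2\}$ of $K$ and smooth functions are dense in $W^{1,q}(Q_r;\mu\dx)$). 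Since $\abs{1-u}\le2\abs{\phi}$ and $\abs{\nabla u}\le2\abs{\nabla\phi}$ pointwise a.e., \eqref{eq:SP0} applied to $\phi\in W^{1,q}_{0,\mu}(Q_{2r})$ together with the negligibility hypothesis $\textrm{Cap}_{q,\mu}(K,Q_{2r})\le\gamma r^{-q}\mu(Q_{2r})$, doubling, and $\gamma\le\tfrac12$ gives
\begin{align*}
\left(\frac1{\mu(Q_{r/2})}\int_{Q_{r/2}}\abs{1-u}^p\mu\dx\right)^{1/p}\le2\left(\frac{\mu(Q_{2r})}{\mu(Q_{r/2})}\cdot\frac1{\mu(Q_{2r})}\int_{Q_{2r}}\abs{\phi}^p\mu\dx\right)^{1/p}\le c\,\gamma^{1/p}.
\end{align*}
Choosing $\gamma=\gamma(n,p,[\mu]_{\mathcal{A}_p})\le\tfrac12$ so small that $c\gamma^{1/p}\le\tfrac12$, the left-hand side of \eqref{eq:wSP'} is then at least $(2r)^{-1}$, while $\abs{\nabla u}\le2\abs{\nabla\phi}$ and doubling bound its right-hand side by $c\bigl(\textrm{Cap}_{q,\mu}(K,Q_{2r})/\mu(Q_{2r})\bigr)^{1/q}$; substituting and rearranging yields \eqref{eq:cap'}. (When $\textrm{Cap}_{q,\mu}(K,Q_{2r})=0$ the same family of test functions contradicts \eqref{eq:wSP'}, so the statement is vacuous.)

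\textbf{Main obstacle.} I expect the delicate step to be the competitor construction in item~(1): producing an \emph{admissible} function on the larger cube $Q_{2r}$ out of $u$ requires a weighted extension compatible with the Muckenhoupt structure (so that the competitor's energy stays $\lesssim\abs{A}^{-q}\int_{Q_r}\abs{\nabla u}^q\mu$), and the lower-order contribution $r^{-q}\int_{Q_r}\abs{u-A}^q\mu$ generated by the cut-off must be absorbed through the mean-value Poincar\'e inequality of Lemma~\ref{lem:SP0} --- which is exactly what pins down the exponent range $q\in[p_0,p]$. Item~(2) is the more routine direction, the only real subtlety being the quantitative calibration of $\gamma$ so that the near-minimiser $\phi$ is small enough in weighted $L^p$-mean for $(1-2\phi)_+$ to remain bounded away from zero on $Q_{r/2}$.
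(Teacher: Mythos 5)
Your proof is correct and follows the same Maz'ya-type capacitary strategy as the paper: item~(1) is proved by building an admissible competitor for $\textrm{Cap}_{q,\mu}(K,Q_{2r})$ out of $u$ via a weighted extension and invoking Lemma~\ref{lem:SP0}, while item~(2) is proved by inserting a near-minimizer of the capacity into the assumed inequality and using negligibility to keep the resulting test function bounded away from zero on $Q_{r/2}$.

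The differences are in detail and, in one place, to your advantage. For item~(1), the paper first establishes an abstract two-sided comparison between $\textrm{Cap}_{q,\mu}(K,Q_2)$ and $\inf\sum_{m=0}^1\norm{\nabla_m(1-u)}^q_{L^q(Q_1;\mu dx)}$ over admissible $u$ (their Step~1, using Maz'ya's extension operator and property~(ii) thereof), and then applies this with $u$ replaced by $u/N$, $N$ being the normalized weighted $L^q$ norm of $u$; you instead work directly with the unweighted mean $A=\mean{u}_{Q_r}$ and construct the competitor $\phi=A^{-1}\eta\widetilde{v}$ explicitly, absorbing the lower-order term $r^{-q}\int_{Q_r}\abs{u-A}^q\mu\dx$ via \eqref{eq:SP1}. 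This is a streamlined reformulation of the same argument; the one point to make precise is that the local property $\widetilde{v}\equiv A$ near $K$ is best obtained exactly as in the paper, i.e.\ by applying the extension operator with property~(ii) to $u$ (which vanishes near $K$) and then setting $\widetilde{v}:=A-\widetilde{u}$, rather than appealing to an unstated locality of Chua's operator. For item~(2), the paper tests \eqref{eq:wSP'} with $u=1-\psi$ for a near-minimizer $\psi$; but as written, $1-\psi$ need not vanish in a neighbourhood of $K$ (it is only $\leq 0$ there), so strictly speaking it is not in the class for which \eqref{eq:wSP'} is assumed --- a small gap that is easily patched by truncating $\psi$. Your choice $u=(1-2\phi)_+$ vanishes identically on the open set $\set{\phi>1/2}\supset K$, so it is directly admissible; the pointwise bounds $\abs{1-u}\leq 2\abs{\phi}$ and $\abs{\nabla u}\leq 2\abs{\nabla\phi}$ then feed into the same chain of inequalities as in the paper. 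This is a genuine technical improvement over the paper's argument, though it does not change the underlying approach.
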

\begin{proof}
We first prove (a). Let us prove (a) in the case of $r=1$ and then use the scaling to prove in the case of $r\neq 1$. Note that from \cite[Lemma 3.6]{MenPhu12}, we can find
\begin{align}\label{eq:p0}
p_0=p_0(n,p,[\mu]_{\mathcal{A}_p})\in(1,p)
\end{align}
such that $\mu\in\mathcal{A}_p$ implies $\mu\in\mathcal{A}_{\gamma}$ for any $\gamma\in[p_0,p]$. Moreover, we have $[\mu]_{\mathcal{A}_{\gamma}}\leq c(n,p,[\mu]_{\mathcal{A}_p})$.

\textbf{Step 1.} We first show that if $K\in \overline{Q}_1$ is a compact set, then
\begin{align}\label{eq:cap1}
\begin{split}
&\text{Cap}_{q,\mu}(K,Q_2)\\
&\quad\eqsim\inf\left\{\sum^{1}_{m=0}\norm{\nabla_m(1-u)}^q_{L^q(Q_1;\mu dx)}:u\in C^{\infty}(\overline{Q}_1),\text{dist}(\text{supp}\,u,K)>0\right\}
\end{split}
\end{align}
for any $q\in[p_0,p]$ with the implicit constant $c=c(n,p,[\mu]_{\mathcal{A}_p})$, where we denote $\nabla_0 u=u$.
	To show this, we recall that by \cite[Section 1.1.17]{Maz11}, there exists a linear mapping
	\begin{align*}
		A:C^{k-1,1}(\bar{Q}_1)\rightarrow C^{k-1,1}(\bar{Q}_{2})\quad\text{for}\,\, k=1,2,\dots
	\end{align*}
	such that
	\begin{itemize}
		\item[(i)] $Av=v$ on $\bar{Q}_1$,
		\item[(ii)] if $\text{dist}(\text{supp}\,v,K)>0$, then $\text{dist}(\text{supp}(Av),K)>0$ and
		\item[(iii)] $\norm{\nabla_m Av}_{L^{\gamma}(Q_{2};\mu dx)}\leq c\norm{\nabla_mv}_{L^{\gamma}(Q_1;\mu dx)}$
	\end{itemize}
	for $m=0,1$ and $1\leq \gamma\leq \infty$.

	To show \textbf{Step 1}, let $v=A(1-u)$ and $\eta\in C^{\infty}_0(Q_{2})$ with $\eta\equiv 1$ in a neighborhood of $Q_1$. Then by the following triangle inequality
	\begin{align}\label{eq:tri}
		(a+b)^q\leq 2^q(a^q+b^q)\leq c(p)(a^q+b^q),
	\end{align}
	there holds
	\begin{align}\label{eq:cap2}
		\begin{split}
			\text{Cap}_{q,\mu}(K,Q_2)&\leq \int_{Q_2}|\nabla(\eta v)|^q\mu\,dx\\
			&\leq c\sum^1_{m=0}\norm{\nabla_mA(1-u)}^q_{L^q(Q_2;\mu dx)}\leq c\sum^1_{m=0}\norm{\nabla_m(1-u)}^q_{L^q(Q_1;\mu dx)}
		\end{split}
	\end{align}
	with $c=c(p)$, where for the last inequality we have used (i) and (iii). On the other hand, let
	\begin{align*}
		\mathcal{M}(K,Q_2):=\{f\in C^{\infty}_0(Q_2):f\equiv 1\,\,\text{in a neighborhood of }K\}.
	\end{align*} 
	Then by the weighted Poincar\'{e} inequality \eqref{eq:SP0} of Lemma \ref{lem:SP0},
	\begin{align*}
		\sum^1_{m=0}\norm{\nabla_m f}^q_{L^q(Q_1;\mu dx)}\leq c\norm{\nabla f}^q_{L^q(Q_2;\mu dx)}
	\end{align*}
	with $c=c(n,p,[\mu]_{\mathcal{A}_p})$. Minimizing the right-hand side of the above estimate over the set $\mathcal{M}(K,Q_2)$, we have
	\begin{align*}
		\sum^1_{m=0}\norm{\nabla_mf}^q_{L^q(Q_1;\mu dx)}\leq c\,\text{Cap}_{q,\mu}(K,Q_2).
	\end{align*}
	Then by the above estimate and \eqref{eq:cap2}, \textbf{Step 1} is proved.

	\textbf{Step 2.} Now we will show (a) when $r=1$.
	Let $u\in C^{\infty}(\bar{Q}_1)$, $\text{dist}(\text{supp}\,u,K)>0$ and the number $N$ be such that
	\begin{align*}
		N^q=\dfrac{1}{\int_{Q_1}\mu\,dx}\int_{Q_1}|u|^q\mu\,dx.
	\end{align*}
	Then by \textbf{Step 1} and the triangle inequality \eqref{eq:tri} we have
	\begin{align*}
		\text{Cap}_{q,\mu}(K,Q_2)&\leq \sum^1_{m=0}\norm{\nabla_m(1-N^{-1}u)}^q_{L^q(Q_1;\mu dx)}\\
		&=cN^{-q}\int_{Q_1}|\nabla u|^q\mu\,dx+c\int_{Q_1}|(1-N^{-1}u)|^q\mu\,dx
	\end{align*}
	with some $c=c(p)$, which is equivalent to
	\begin{align}\label{eq:cap3}
		N^q\text{Cap}_{q,\mu}(K,Q_2)\leq c\int_{Q_1}|\nabla u|^q\mu\,dx+c\int_{Q_1}|(N-u)|^q\mu\,dx.
	\end{align}
	We may assume that $\mean{u}_{Q_1}\geq 0$ without loss of generality. Then we obtain
	\begin{align*}
		|N-\mean{u}_{Q_1}|&=\left|\left(\dfrac{1}{\int_{Q_1}\mu\,dx}\int_{Q_1}|u|^q\mu\,dx\right)^{\frac{1}{q}}-\dashint_{Q_1}u\,dx\right|\\
		&\leq c\left(\dfrac{1}{\int_{Q_1}\mu\,dx}\int_{Q_1}|u-\mean{u}_{Q_1}|^q\mu\,dx\right)^{\frac{1}{q}}.
	\end{align*}
	Thus it follows that
	\begin{align}\label{eq:cap4}
		\begin{split}
			\int_{Q_1}|N-u|^q\mu\,dx&\lesssim\int_{Q_1}|N-\mean{u}_{Q_1}|^q\mu\,dx+\int_{Q_1}|u-\mean{u}_{Q_1}|^q\mu\,dx\\
			&\lesssim\int_{Q_1}|u-\mean{u}_{Q_1}|^q\mu\,dx.
		\end{split}
	\end{align}

	By \eqref{eq:cap3}, \eqref{eq:cap4} and the following weighted Poincar\'{e} inequality \eqref{eq:SP1} of Lemma \ref{lem:SP0},
	\begin{align*}
		\int_{Q_1}|u-\mean{u}_{Q_1}|^q\mu\,dx\leq c\int_{Q_1}|\nabla u|^q\mu\,dx,
	\end{align*}
	we find
	\begin{align}\label{eq:cap4.11}
		N^q\text{Cap}_{q,\mu}(K,Q_2)\leq c\int_{Q_1}|\nabla u|^q\mu\,dx
	\end{align}
	with $c=c(n,p,[\mu]_{\mathcal{A}_p})$.
	Then we have
	\begin{align}\label{eq:cap5}
	\begin{split}
\left(\int_{Q_1}|u|^p\mu\,dx\right)^{\frac{q}{p}}&\leq c\mu(Q_1)^{\frac{q}{p}-1}\int_{Q_1}|\nabla u|^q\mu\,dx+c\mu(Q_1)^{\frac{q}{p}-1}\int_{Q_1}|u|^q\mu\,dx\\
&\leq c\mu(Q_1)^{\frac{q}{p}-1}\left(1+\dfrac{\int_{Q_1}\mu\,dx}{\text{Cap}_{q ,\mu}(K,Q_2)}\right)\int_{Q_1}|\nabla u|^q\mu\,dx\\
&\leq c\mu(Q_1)^{\frac{q}{p}-1}\left(\dfrac{\int_{Q_2}\mu\,dx}{\text{Cap}_{q ,\mu}(K,Q_2)}\right)\int_{Q_1}|\nabla u|^q\mu\,dx\\
&\leq c\left(\dfrac{\mu(Q_1)^{\frac{q}{p}}}{\text{Cap}_{q ,\mu}(K,Q_2)}\right)\int_{Q_1}|\nabla u|^q\mu\,dx,	
\end{split}
\end{align}
where 
\begin{itemize}
	\item for the first inequality we have used  \eqref{eq:SP1} of Lemma \ref{lem:SP0},
	\item for the second one we employed \eqref{eq:cap4.11},
	\item for the third one we have used the fact that for any $\phi_0\in C^{\infty}_c(Q_2)$ with $\phi_0\equiv1$ in $Q_1$, $|\nabla\phi_0|\leq 2$ in $Q_2$, there holds
	\begin{align*}
	\text{Cap}_{q,\mu}(K,Q_2)\leq \int_{Q_2}|\nabla\phi_0|^q\mu\,dx\leq c\int_{Q_2}\mu\,dx
	\end{align*}
	with $c=c(p)$, and 
	\item for the last one we employed \cite[Lemma 3.5]{MenPhu12} to get
	\begin{align*}
	\dfrac{\mu(Q_2)}{\mu(Q_1)}\leq[\mu]_{\mathcal{A}_p}\left(\dfrac{|Q_2|}{|Q_1|}\right)^p\leq c(n,p,[\mu]_{\mathcal{A}_p}).
	\end{align*}
\end{itemize}
Then \eqref{eq:cap5} implies that for some $C$ satisfying \eqref{eq:cap} with some $c_2=c_2(n,p,[\mu]_{\mathcal{A}_p})$, \eqref{eq:wSP} holds when $r=1$. Now let us consider the case of $r\neq 1$. We use change of variables appropriately and apply \eqref{eq:cap5} to obtain 
	\begin{align}\label{eq:cap4.1}
		\begin{split}
			\left(\int_{Q_r}\left|u\right|^p\mu\,dx\right)^{\frac{1}{p}}&=\left(r^n\int_{Q_1}|u(ry)|^p\mu(ry)\,dy\right)^{\frac{1}{p}}\\
			&\leq cr^{\frac{n}{p}}\left(\dfrac{\left(\int_{Q_1}\mu(ry)\,dy\right)^{\frac{q}{p}}}{\text{Cap}_{q,\mu(r\cdot)}(\tilde{K},Q_2)}\int_{Q_1}|\nabla(u(ry))|^q\mu(ry)\,dy\right)^{\frac{1}{q}}\\
			&= cr^{\frac{n}{p}+1}\underbrace{\left[\dfrac{\left(\int_{Q_1}\mu(ry)\,dy\right)^{\frac{q}{p}}}{\text{Cap}_{q,\mu(r\cdot)}(\tilde{K},Q_2)}\right]^{\frac{1}{q}}}_{=:I_1}\underbrace{\left[\int_{Q_1}|\nabla u(ry)|^q\mu(ry)\,dy\right]^{\frac{1}{q}}}_{=:I_2}
		\end{split}
	\end{align}
	with some $c=c(n,p,[\mu]_{\mathcal{A}_p})$,	where we denote
	\begin{align*}
		\text{Cap}_{q,\mu(r\cdot)}(\tilde{K},Q_2):=\inf\left\{\int_{Q_{2}}|\nabla\phi|^q\mu(rx)\,dx:\phi\in C^{\infty}_0(Q_{2}),\,\,\phi\geq 1\,\,\text{on}\,\, \tilde{K}\right\}
	\end{align*}
	and $\tilde{K}:=\{x\in \bar{Q}_{1}:rx\in K\}$.

	For $I_1$, we see that
	\begin{align}\label{eq:cap7}
		\int_{Q_1}\mu(ry)\,dy\eqsim \dfrac{1}{r^n}\int_{Q_{r}}\mu(x)\,dx.
	\end{align}
	Moreover, for $\phi\in C^{\infty}(Q_2)$,
	\begin{align*}
		\int_{Q_2}|\nabla \phi|^q\mu(ry)\,dy\eqsim\int_{Q_{2r}}r^{q-n}\left|(\nabla\phi)\left(\dfrac{x}{r}\right)\right|^q\mu(x)\,dx
	\end{align*}
	and so we observe
	\begin{align}\label{eq:cap8}
		\text{Cap}_{q,\mu(r\cdot)}(\tilde{K},Q_2)\eqsim r^{q-n}\text{Cap}_{q,\mu}(K,Q_{2r}).
	\end{align}
	Hence it follows that
	\begin{align}\label{eq:cap9}
		I_1\eqsim r^{-1}\left[\dfrac{\left(\int_{Q_{r}}\mu(x)\,dx\right)^{\frac{q}{p}}}{\text{Cap}_{q,\mu}(K,Q_{2r})}\right]^{\frac{1}{q}}
	\end{align}
	with the implicit constant $c=c(n)$. On the other hand, $I_2$ can be computed as
	\begin{align}\label{eq:cap10}
		\begin{split}
			I_2=\left[\dfrac{1}{r^n}\int_{Q_r}|(\nabla u)(x)|^q\mu(x)\,dx\right]^{\frac{1}{q}}=r^{-\frac{n}{q}}\left[\int_{Q_r}|(\nabla u)(x)|^q\mu(x)\,dx\right]^{\frac{1}{q}}.
		\end{split}
	\end{align}
	Therefore, we obtain
	\begin{align}\label{eq:cap10.1}
		\left(\int_{Q_r}\left|u\right|^p\mu\,dx\right)^{\frac{1}{p}}\lesssim\left[\dfrac{\left(\int_{Q_{r}}\mu(x)\,dx\right)^{\frac{q}{p}}}{\text{Cap}_{q,\mu}(K,Q_{2r})}\right]^{\frac{1}{q}}\left[\int_{Q_r}|(\nabla u)(x)|^q\mu(x)\,dx\right]^{\frac{1}{q}}
	\end{align}
	with the implicit constant $c=c(n,p,[\mu]_{\mathcal{A}_p})$. Then \eqref{eq:wSP} follows for some $C$ such that \eqref{eq:cap} holds.

	\textbf{Step 3.} Now we will show (b) in case of $r=1$. Let $\psi\in C^{\infty}_0(Q_2)$ such that $\psi\geq 1$ on $K$ and 
	\begin{align}\label{eq:cap6}
		\norm{\nabla\psi}^q_{L^q(Q_2;\mu dx)}\leq\text{Cap}_{q,\mu}(K,Q_2)+\epsilon
	\end{align}
	for $\epsilon\in(0,1)$. Denoting $u=1-\psi$, we obtain
	\begin{align*}
		\int_{Q_1}|\nabla u|^q\mu\,dx\leq\int_{Q_2}| \nabla\psi|^q\mu\,dx.
	\end{align*}
	Then together with \eqref{eq:wSP'} and \eqref{eq:cap6}, it follows that
	\begin{align*}
		\int_{Q_{\frac{1}{2}}}|u|^q\mu\,dx\lesssim\,C\left(\text{Cap}_{q,\mu}(K,Q_2)+\epsilon\right).
	\end{align*}
	Now note that by \eqref{eq:doubling}, we see that
	\begin{align}\label{eq:cap10.2}
	\int_{Q_2}\mu\,dx\leq[\mu]_{\mathcal{A}_p}\left(\frac{|Q_2|}{|Q_{\frac{1}{2}}|}\right)^p\int_{Q_{\frac{1}{2}}}\mu\,dx\leq c(n,p,[\mu]_{\mathcal{A}_p})\int_{Q_{\frac{1}{2}}}\mu\,dx.
	\end{align}
	Two previous displays with H\"{o}lder's inequality yield
	\begin{align}\label{eq:cap11}
		\begin{split}
			1-\frac{1}{\int_{Q_{\frac{1}{2}}}\mu\,dx}\int_{Q_{\frac{1}{2}}}\psi\mu\,dx&=\frac{1}{\int_{Q_{\frac{1}{2}}}\mu\,dx}\int_{Q_{\frac{1}{2}}}u\mu\,dx\\
			&\lesssim\, C\left(\dfrac{\text{Cap}_{q,\mu}(K,Q_2)+\epsilon}{\int_{Q_{\frac{1}{2}}}\mu\,dx}\right)^{\frac{1}{q}}\\
			&\lesssim\, C\left(\dfrac{\text{Cap}_{q,\mu}(K,Q_2)+\epsilon}{\int_{Q_{2}}\mu\,dx}\right)^{\frac{1}{q}}.
		\end{split}
	\end{align}
	Now we want to show that $\frac{1}{\int_{Q_{1/2}}\mu\,dx}\int_{Q_{1/2}}\psi\mu\,dx$ is small. By \eqref{eq:SP0} of Lemma \ref{lem:SP0}, \eqref{eq:cap6} and \eqref{eq:cap10.2} again, we have
	\begin{align*}
		\frac{1}{\int_{Q_{\frac{1}{2}}}\mu\,dx}\int_{Q_{\frac{1}{2}}}\psi\mu\,dx&\leq \left(\frac{1}{\int_{Q_{\frac{1}{2}}}\mu\,dx}\int_{Q_{\frac{1}{2}}}\psi^q\mu\,dx\right)^{\frac{1}{q}}\\
		&\lesssim\left(\frac{1}{\int_{Q_{\frac{1}{2}}}\mu\,dx}\int_{Q_{\frac{1}{2}}}|\nabla\psi|^q\mu\,dx\right)^{\frac{1}{q}}\\
		&\leq c_0\left(\dfrac{\text{Cap}_{q,\mu}(K,Q_2)+\epsilon}{\int_{Q_{2}}\mu\,dx}\right)^{\frac{1}{q}}
	\end{align*}
	with $c_0=c_0(n,p,[\mu]_{\mathcal{A}_p})\geq 1$. Thus if $\epsilon\in(0,1)$ is sufficiently small, there holds
	\begin{align*}
		\frac{1}{\int_{Q_{\frac{1}{2}}}\mu\,dx}\int_{Q_{\frac{1}{2}}}\psi\mu\,dx\leq c_0\left(2\gamma \right)^{\frac{1}{q}}\leq c_0\left(2\gamma \right)^{\frac{1}{p}}
	\end{align*}
	since $\gamma\leq\frac{1}{2}$. Therefore, if $\gamma=\gamma(n,p,[\mu]_{\mathcal{A}_p})$ is sufficiently small so that $c_0\left(2\gamma \right)^{\frac{1}{p}}\leq 1/2$, from \eqref{eq:cap11} we find
	\begin{align*}
		C^{-q}\leq \dfrac{\text{Cap}_{q,\mu}(K,Q_2)+\epsilon}{\int_{Q_{2}}\mu\,dx}.
	\end{align*}
	Now since $\epsilon$ is arbitrary small, we have \eqref{eq:cap'} when $r=1$.

	In case of $r\neq 1$, by following the similar argument to \eqref{eq:cap4.1}--\eqref{eq:cap10.1}, we again obtain \eqref{eq:cap'}.
\end{proof}

\begin{remark} We have some remarks related to the above proposition.
\begin{enumerate}
	\item Since the set of functions $C^{\infty}(\overline{Q}_r)$ with $\text{dist}(\text{supp}\,u,K)>0$ is dense in $W^{1,q}(Q_{r},K;\mu dx)$, Proposition \ref{prop:SP} holds for any $u\in W^{1,q}(Q_{r},K;\mu dx)$.
	\item Proposition \ref{prop:SP} says that if $K\in \mathcal{N}_{\mu;\gamma,q}(Q_r)$ for small $\gamma\in(0,\frac{1}{2})$ depending on $n,p$ and $[\mu]_{\mathcal{A}_p}$, then
	\begin{align}\label{eq:cap.com}
	C\eqsim \dfrac{1}{r}\left(\dfrac{\mu(Q_{2r})}{\text{Cap}_{q,\mu}(K,Q_{2r})}\right)^{\frac{1}{q}}
	\end{align}
	holds with implicit constant $c=c(n,p,[\mu]_{\mathcal{A}_p})$, where $C$ is from \eqref{eq:wSP} and \eqref{eq:wSP'}. Thus in general, the above relation can be regarded as the optimal relation between the constant $C$ and $\text{Cap}_{q,\mu}(K,Q_{2r})$ for Sobolev-Poincar\'{e} inequality in Proposition \ref{prop:SP}.
	\item Due to \eqref{eq:cap7} and \eqref{eq:cap8}, one can see that the right-hand side of  \eqref{eq:cap.com} is scaling-invariant.
\end{enumerate}
\end{remark}

We end up this subsection with the following weighted Poincar\'{e} inequality, which is needed in the next subsection.

\begin{corollary}\label{cor:SP}
	Let $\Omega$ be $(L,R_0)$-Lipschitz for $L,R_0>0$, and let $\mathcal{D}\subset\partial\Omega$. Choose $Q_R\subset\setR^n$ such that $\Omega\subset Q_R$. If
	\begin{align}\label{eq:cap0}
	\textrm{Cap}_{p,\mu}(\mathcal{D},Q_{2R})> 0
	\end{align}
	holds, then for some $c=c(n,p,[\mu]_{\mathcal{A}_p},\mu(Q_{2R}),\text{Cap}_{p,\mu}(K,Q_{2R}),L,R_0)$ we have
	\begin{align}\label{eq:wSPOmega}
	\left(\int_{\Omega}\left|f\right|^p\mu\,dx\right)^{\frac{1}{p}}\leq c\left(\int_{\Omega}|\nabla f|^p\mu\,dx\right)^{\frac{1}{p}}
	\end{align}
	for any $f\in W^{1,p}(\Omega,\mathcal{D};\mu\,dx)$.
\end{corollary}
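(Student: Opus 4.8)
The plan is to reduce to $f\in C^\infty(\overline\Omega)$ with $\text{dist}(\text{supp}\,f,\mathcal D)>0$ — these are dense in $W^{1,p}(\Omega,\mathcal D;\mu dx)$ by the very definition of the space — and then to transport the capacitary Sobolev--Poincar\'e estimate of Proposition~\ref{prop:SP}, which is stated on a cube, to the domain $\Omega$ by extension. After harmlessly enlarging $R$ we may assume $\overline\Omega\Subset Q_R\Subset Q_{2R}$, so that $\mathcal D\subset\overline\Omega$ sits in the interior of $Q_R$.

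The key device is to work modulo the weighted mean of $f$. Put $m:=\mu(\Omega)^{-1}\int_\Omega f\,\mu\,dx$ and $w:=m-f$. The weighted Poincar\'e--Wirtinger inequality on the bounded Lipschitz domain $\Omega$ — classical for $\mathcal A_p$-weights, obtainable from Lemma~\ref{lem:SP0} by a chaining argument over a Whitney covering of $\Omega$, see also~\cite{Chu92} — gives $\|w\|_{L^p(\Omega;\mu dx)}=\|f-m\|_{L^p(\Omega;\mu dx)}\le B\|\nabla f\|_{L^p(\Omega;\mu dx)}$ with $B$ depending only on $n,p,[\mu]_{\mathcal A_p}$ and the Lipschitz geometry of $\Omega$. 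Since $\nabla w=-\nabla f$, this yields
\begin{align*}
\|w\|_{W^{1,p}(\Omega;\mu dx)}\le (B^p+1)^{1/p}\,\|\nabla f\|_{L^p(\Omega;\mu dx)}.
\end{align*}
The point of replacing $f$ by $w$ is that now the \emph{whole} $W^{1,p}(\Omega;\mu dx)$-norm of $w$, not merely its gradient, is controlled by $\|\nabla f\|_{L^p(\Omega;\mu dx)}$; this is exactly what lets us avoid any absorption argument in the last step.

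Next, since a bounded Lipschitz domain is a weighted Sobolev extension domain~\cite{Chu92}, I would take an extension $\widetilde w\in W^{1,p}(\setR^n;\mu dx)$ of $w$ with $\|\widetilde w\|_{W^{1,p}(\setR^n;\mu dx)}\le c_{\mathrm{ext}}\|w\|_{W^{1,p}(\Omega;\mu dx)}$, the constant $c_{\mathrm{ext}}$ depending only on $n,p,[\mu]_{\mathcal A_p}$ and the Lipschitz geometry of $\Omega$. Because $f\equiv 0$ near $\mathcal D$ we have $w\equiv m$ on a neighbourhood of $\mathcal D$ in $\overline\Omega$, and since the extension operator of~\cite{Chu92} is built from a Whitney decomposition — hence is local and reproduces near $\partial\Omega$ the value that $w$ attains there — $\widetilde w\equiv m$ on a neighbourhood of $\mathcal D$ in $\setR^n$. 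Therefore $u:=\widetilde w-m$ vanishes near $\mathcal D$, belongs to $W^{1,p}(Q_R;\mu dx)$, and (mollifying, which preserves the vanishing near $\mathcal D$ and converges in the $\mathcal A_p$-weighted norm) lies in $W^{1,p}(Q_R,\mathcal D;\mu dx)$.

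Finally I would apply Proposition~\ref{prop:SP}(a) — legitimate on $W^{1,p}(Q_R,\mathcal D;\mu dx)$ by the Remark following it — with $r=R$, $q=p$ and $K=\mathcal D$; the hypothesis~\eqref{eq:cap0} ensures $\text{Cap}_{p,\mu}(\mathcal D,Q_{2R})>0$, so~\eqref{eq:wSP}--\eqref{eq:cap} give
\begin{align*}
\int_{Q_R}|u|^p\mu\,dx\le c_2^p\,\frac{\mu(Q_{2R})}{\text{Cap}_{p,\mu}(\mathcal D,Q_{2R})}\int_{Q_R}|\nabla\widetilde w|^p\mu\,dx.
\end{align*}
Since $u=-f$ on $\Omega$, the left-hand side dominates $\int_\Omega|f|^p\mu\,dx$; estimating the right-hand side by $c_2^p\,\mu(Q_{2R})\,\text{Cap}_{p,\mu}(\mathcal D,Q_{2R})^{-1}c_{\mathrm{ext}}^p(B^p+1)\,\|\nabla f\|_{L^p(\Omega;\mu dx)}^p$ and taking $p$-th roots proves~\eqref{eq:wSPOmega}; collecting the constants gives a $c$ of the form asserted in the statement, and a density argument extends the inequality to all of $W^{1,p}(\Omega,\mathcal D;\mu dx)$. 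The step I expect to require the most care is the extension: verifying that the extension of $w$ is genuinely $\equiv m$ on a full $\setR^n$-neighbourhood of $\mathcal D$ (locality and constant-reproduction of the Whitney-type extension) and that the resulting $u$ is admissible in $W^{1,p}(Q_R,\mathcal D;\mu dx)$; the remaining estimates are routine once Lemma~\ref{lem:SP0}, Proposition~\ref{prop:SP} and the cited Poincar\'e--Wirtinger and extension bounds are available.
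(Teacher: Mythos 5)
Your proposal is correct, and it is actually more careful than the paper's own two-line proof, which simply extends $f$ to $\overline f\in W^{1,p}(Q_{2R},\mathcal D;\mu\,dx)$ by \cite{Chu92} with $\|\overline f\|_{W^{1,p}(Q_{2R};\mu\,dx)}\lesssim\|f\|_{W^{1,p}(\Omega;\mu\,dx)}$ and then invokes Proposition~\ref{prop:SP}(a). Read literally, that argument is circular: Proposition~\ref{prop:SP}(a) controls $\int_{Q_R}|\overline f|^p\mu$ by $\int_{Q_R}|\nabla\overline f|^p\mu$, but the extension bound controls $\int_{Q_R}|\nabla\overline f|^p\mu$ by the \emph{full} norm $\int_\Omega|f|^p\mu+\int_\Omega|\nabla f|^p\mu$, yielding only $\int_\Omega|f|^p\mu\le A\bigl(\int_\Omega|f|^p\mu+\int_\Omega|\nabla f|^p\mu\bigr)$ with no guarantee that $A<1$, so one cannot absorb. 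Your mean-subtraction device $w=m-f$ is exactly the missing step: it converts the extension bound into a bound for $\|\nabla\widetilde w\|_{L^p(Q_R;\mu)}$ by $\|\nabla f\|_{L^p(\Omega;\mu)}$ alone, via a weighted Poincar\'e--Wirtinger inequality on the Lipschitz domain $\Omega$, and the argument then closes without absorption. So the high-level route is the same as the paper's (extend, then apply Proposition~\ref{prop:SP}(a)), but you have supplied the intermediate reduction that the paper tacitly glosses over.

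The one place your argument leans on something not fully spelled out is the claim that the Chua/Whitney-type extension of $w$ is literally constant $\equiv m$ on an open $\setR^n$-neighbourhood of $\mathcal D$, so that $u=\widetilde w-m$ qualifies as an element of $W^{1,p}(Q_R,\mathcal D;\mu\,dx)$. You flag this yourself, and it is indeed the delicate point: Whitney-type operators are local and map constants to constants, so a function that is constant on $U\cap\Omega$ (with $U\ni x_0$ open) is extended to a constant on some smaller $U'\ni x_0$; but the size of $U'$ depends on the Whitney geometry, and since $\mathcal D$ is merely a closed set one must check that the union of these smaller neighbourhoods still covers a full neighbourhood of $\mathcal D$, uniformly in the point of $\mathcal D$. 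This holds because $f$ vanishes on a \emph{fixed} neighbourhood of $\mathcal D$ in $\overline\Omega$ (by definition of the function space and density), and the Whitney shrinking factor is controlled by $L$; but it deserves a sentence in a polished write-up. With that caveat, the proof is sound, and the constant you produce matches the one asserted, with the dependence on $\mu(Q_{2R})$ and $\text{Cap}_{p,\mu}(\mathcal D,Q_{2R})$ entering exactly through \eqref{eq:cap} with $q=p$.
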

\begin{proof}
Since Lipschitz domain is an extension domain for given parameters $L>0$ and $R_0>0$, (see \cite{Chu92}), there exists $\overline{f}\in W^{1,p}(Q_{2R},\mathcal{D};\mu\,dx)$ such that $\overline{f}\equiv f$ in $\Omega$ with $\norm{\overline{f}}_{W^{1,p}(Q_{2R})}\lesssim \norm{f}_{W^{1,p}(\Omega)}$. Then \eqref{eq:wSPOmega} holds by Proposition \ref{prop:SP} (a).
\end{proof}

\subsection{Main settings, existence and uniqueness}
We are now ready to offer the rigorous settings of the problem in consideration. For $\Omega\subset\setR^n$ be a bounded $(L,R_0)$-Lipschitz domain. The coefficient function $\mathbb{A}:\setR^n\rightarrow\setR^{n\times n}_{\text{sym}}$ is positive definite matrix-valued weight with
\begin{align}
\label{eq:A}
\abs{\mathbb{A}(x)}\, \abs{\mathbb{A}^{-1}(x)}\leq\Lambda
\end{align} 
for any $x\in\setR^n$ with some constant $\Lambda\geq 1$, where by~$\abs{\cdot}$ we denote the spectral norm of a matrix, i.e., 
\begin{align*}
\abs{\mathbb{A}(x)}=\sup_{\abs{y}\leq 1}\dfrac{\abs{\mathbb{A}(x)y}}{\abs{y}}.
\end{align*}
Let us define
\begin{align*}
\omega(x):=\abs{\mathbb{A}(x)}.
\end{align*}
Then one can see that \eqref{eq:A} is equivalent to say that
\begin{align}\label{eq:equivalent}
\Lambda^{-1} \omega(x) \abs{\xi}^2 \leq \skp{\bbA(x) \xi}{\xi} \leq
\Lambda \omega(x) \abs{\xi}^2\quad(\xi\in\setR^n).
\end{align}
For $1<p<\infty$, assume $\omega\in \mathcal{A}_p(\setR^n)$ and let $u\in W^{1,p}(\Omega,\mathcal{D};\omega dx)$ be a weak solution of
\begin{align}\label{eq:eq0}
	\begin{split}
		-\divergence\left(\abs{\nabla u}^{p-2}\mathbb{A}(x)\nabla u\right)&=l \quad\text{in }\Omega,\\
		u&=0\quad\text{on }\mathcal{D},\\
		\dfrac{\partial u}{\partial \nu}&=0\quad\text{on }\mathcal{N},
	\end{split}
\end{align}
where $l$ is a linear functional on $W^{1,p}(\Omega,\mathcal{D};\omega dx)$, and $\frac{\partial u}{\partial \nu}=\sum^{n}_{i=1}\frac{\partial u}{\partial x_i}\nu_i$ is the outward conormal derivative of $u$. By Corollary \ref{cor:SP}, if we assume that
\begin{align*}
	\textrm{Cap}_{p,\omega}(\mathcal{D},Q_{2R})> 0,
\end{align*}
then the following weighted Poincar\'{e} inequality
\begin{align}\label{eq:wSPOmega0}
\left(\int_{\Omega}\left|u\right|^p\omega\,dx\right)^{\frac{1}{p}}\leq c\left(\int_{\Omega}|\nabla u|^p\omega\,dx\right)^{\frac{1}{p}}
\end{align}
holds for some $c>0$, so that the norm of $W^{1,p}(\Omega,\mathcal{D};\omega dx)$ is equivalent to the norm only associated with the gradient. Then the application of Hahn-Banach theorem yields that the functional $l$ is written as follows:
\begin{align*}
l(\phi)=-\sum^n_{i=1}\int_{\Omega}\mathbf{f}\cdot \nabla \phi\,dx
\end{align*}
for some $\mathbf{f}=(\mathbf{f}_1,\dots,\mathbf{f}_n):\Omega\rightarrow\setR^n$ with $|\mathbf{f}|$ belonging to the dual space of $L^{p}_{\omega}(\Omega)$, i.e., $|\mathbf{f}|\in L^{p'}(\Omega;\omega^{-\frac{1}{p-1}}\,dx)$.
By letting
\begin{align*}
G=(G_1,\dots,G_n)\quad\text{with}\quad G_i=\left|\mathbb{A}^{-1}(x)\mathbf{f}\right|^{\frac{2-p}{p-1}}\mathbb{A}^{-1}(x)\mathbf{f}_i,
\end{align*}
we have $|G|\in L^{p}(\Omega;\omega\,dx)$ and $\mathbf{f}(\cdot)=|G(\cdot)|^{p-2}\mathbb{A}(x)G(\cdot)$. Therefore, under the assumption \eqref{eq:wSPOmega}, \eqref{eq:eq0} is equivalent to
\begin{align}\label{eq:eq}
	\begin{split}
		-\divergence\left(\abs{\nabla u}^{p-2}\mathbb{A}(x)\nabla u\right)&=-\divergence\left(\abs{G}^{p-2}\mathbb{A}(x)G\right)\quad\text{in }\Omega,\\
		u&=0\quad\text{on }\mathcal{D},\\
		\dfrac{\partial u}{\partial \nu}&=0\quad\text{on }\mathcal{N}
	\end{split}
\end{align}
for $\abs{G}\in L^p(\Omega;\omega dx)$.
Note that since $\mathbb{A}(x)$ is symmetric and positive definite, $\mathbb{A}^{\frac{1}{2}}(x)$ is well-defined, still symmetric and positive definite. Then the above equation is the Euler-Lagrange equation of the following functional:
\begin{align*}
	v\mapsto \int_{\Omega}|\nabla v|^{p-2}|\mathbb{A}^{\frac{1}{2}}(x)\nabla v|^{2}\,dx-\int _{\Omega}\left<|G|^{p-2}\mathbb{A}^{\frac{1}{2}}(x)G,\mathbb{A}^{\frac{1}{2}}(x)Dv\right>\,dx
\end{align*}
with the admissible set $W^{1,p}(\Omega,\mathcal{D};\omega dx)$.

Now we are ready to give the existence and uniqueness of the weak solution of our problem with the help of the results in the previous section.
\begin{theorem}\label{thm:exist}
	Let $\Omega\subset\setR^n$ be a bounded $(L,R_0)$-Lipschitz domain. Assume that $\mathbb{A}(x):\setR^n\rightarrow\setR^{n\times n}_{\sym}$ satisfies $\abs{\mathbb{A}(x)}>0$ a.e., and $\abs{\mathbb{A}(x)}\abs{\mathbb{A}^{-1}(x)}\leq\Lambda$ for some $\Lambda>0$. Suppose that for $1<p<\infty$, $\abs{\mathbb{A}(x)}\in\mathcal{A}_p(\setR^n)$ and
	\begin{align}\label{eq:cap00}
	\textrm{Cap}_{p,\omega}(\mathcal{D},Q_{2R})> 0
	\end{align}
	hold for $\Omega\subset Q_R$ with $\omega(x)=\abs{\mathbb{A}(x)}$ and $\abs{G}\in L^p(\Omega;\omega dx)$. Then there exists a unique weak solution $u\in W^{1,p}(\Omega,\mathcal{D};\omega dx)$ of \eqref{eq:eq}. Moreover, we have the estimate
	\begin{align}\label{eq:en.est}
		\int_{\Omega}|\nabla u|^p\omega\,dx \leq c\int_{\Omega}|G|^p\omega\,dx
	\end{align}
	with $c=c(n,p,\Lambda)$.
\end{theorem}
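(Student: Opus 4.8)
The natural plan is the direct method of the calculus of variations applied to the energy functional
\begin{align*}
\mathcal{F}(v):=\int_{\Omega}\abs{\nabla v}^{p-2}\abs{\mathbb{A}^{\frac{1}{2}}(x)\nabla v}^{2}\dx-\int_{\Omega}\skp{\abs{G}^{p-2}\mathbb{A}^{\frac{1}{2}}(x)G}{\mathbb{A}^{\frac{1}{2}}(x)\nabla v}\dx
\end{align*}
over the admissible class $X:=W^{1,p}(\Omega,\mathcal{D};\omega\dx)$, whose Euler--Lagrange equation is \eqref{eq:eq}, as recorded above. First I would note that, since $\omega\in\mathcal{A}_p(\setR^n)$, the space $X$ is a uniformly convex (hence reflexive) Banach space in which, by definition, smooth functions vanishing near $\mathcal{D}$ are dense, and that by Corollary~\ref{cor:SP} the hypothesis \eqref{eq:cap00} supplies the weighted Poincar\'{e} inequality \eqref{eq:wSPOmega0}; consequently $\norm{v}_{X}\eqsim\norm{\nabla v}_{L^{p}(\Omega;\omega\dx)}$ on $X$.

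Next I would check that $\mathcal{F}$ is well defined and coercive on $X$. By \eqref{eq:equivalent} (equivalently \eqref{eq:A}) one has $\Lambda^{-1}\omega(x)\abs{\xi}^{2}\leq\abs{\mathbb{A}^{\frac{1}{2}}(x)\xi}^{2}\leq\Lambda\omega(x)\abs{\xi}^{2}$, so the leading term of $\mathcal{F}(v)$ is comparable to $\norm{\nabla v}_{L^{p}(\Omega;\omega\dx)}^{p}$, while Cauchy--Schwarz together with this pointwise bound controls the linear integrand by $\Lambda\,\omega\abs{G}^{p-1}\abs{\nabla v}$; since $(p-1)p'=p$, H\"older's inequality then bounds the linear term by $\Lambda\norm{G}_{L^{p}(\Omega;\omega\dx)}^{p-1}\norm{\nabla v}_{L^{p}(\Omega;\omega\dx)}$. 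Young's inequality gives $\mathcal{F}(v)\geq\frac{1}{2\Lambda}\norm{\nabla v}_{L^{p}(\Omega;\omega\dx)}^{p}-c(p,\Lambda)\norm{G}_{L^{p}(\Omega;\omega\dx)}^{p}$, so $\inf_{X}\mathcal{F}\in(-\infty,0]$ and, using \eqref{eq:wSPOmega0}, $\mathcal{F}$ is coercive on $X$.

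The main point is the sequential weak lower semicontinuity of $\mathcal{F}$ on $X$. The linear term is weakly continuous, so it remains to treat the leading term, for which I would use that it is convex on $X$ -- this coming from the pointwise convexity of $\xi\mapsto\abs{\xi}^{p-2}\abs{\mathbb{A}^{\frac{1}{2}}(x)\xi}^{2}$ -- and strongly continuous on $X$ (by its comparability with $\norm{\nabla\cdot}_{L^{p}(\Omega;\omega\dx)}^{p}$ together with a dominated convergence argument), since a convex strongly continuous functional on a Banach space is sequentially weakly lower semicontinuous. Establishing this convexity structure for the leading integrand, given that $\mathbb{A}$ is controlled only through its condition number $\Lambda$ and need not be a scalar multiple of the identity, is the step I expect to require the most care; the remaining ingredients are routine. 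Granting it, the direct method applies: a minimizing sequence is bounded in $X$ by coercivity, hence by reflexivity a subsequence converges weakly to some $u\in X$, and weak lower semicontinuity forces $u$ to minimize $\mathcal{F}$; hence $u$ is a weak solution of \eqref{eq:eq}.

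Finally, uniqueness follows from strict convexity of $v\mapsto\int_{\Omega}\abs{\nabla v}^{p-2}\abs{\mathbb{A}^{\frac{1}{2}}(x)\nabla v}^{2}\dx$ modulo additive constants together with \eqref{eq:wSPOmega0} (which leaves $0$ as the only constant in $X$), so $\mathcal{F}$ has at most one minimizer; equivalently, one subtracts the weak formulations of two solutions, tests with $u_{1}-u_{2}$, and applies the relevant monotonicity inequality to get $\nabla u_{1}=\nabla u_{2}$ and hence $u_{1}=u_{2}$. For the estimate \eqref{eq:en.est} I would test the weak formulation of \eqref{eq:eq} with $\phi=u$: the left-hand side is $\geq\Lambda^{-1}\int_{\Omega}\abs{\nabla u}^{p}\omega\dx$ by \eqref{eq:equivalent}, the right-hand side is $\leq\Lambda\norm{G}_{L^{p}(\Omega;\omega\dx)}^{p-1}\norm{\nabla u}_{L^{p}(\Omega;\omega\dx)}$ by the bound from the previous paragraph, and dividing by $\norm{\nabla u}_{L^{p}(\Omega;\omega\dx)}^{p-1}$ and raising to the power $\frac{p}{p-1}$ yields \eqref{eq:en.est} with $c=c(p,\Lambda)$, which is of the claimed form.
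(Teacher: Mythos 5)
Your plan mirrors the paper's almost verbatim (direct method for the stated functional $\mathcal{F}$, coercivity via Corollary~\ref{cor:SP}, weak lower semicontinuity from convexity, energy estimate by testing with $u$, uniqueness via monotonicity), and your instinct to single out the convexity of the leading integrand as the step requiring the most care is exactly right. The problem is that this step does not merely require care: it actually fails for $p\neq 2$ when $\mathbb{A}(x)$ is not a scalar multiple of the identity. Set $f(\xi):=\abs{\xi}^{p-2}\abs{\mathbb{A}^{1/2}\xi}^{2}=\abs{\xi}^{p-2}\skp{\mathbb{A}\xi}{\xi}$. Taking $n=2$, $p=4$, $\mathbb{A}=\diag(1,M)$ and $\xi=(1,\tfrac{3}{10})$, the Hessian of $f$ at $\xi$ has determinant
\begin{align*}
\Bigl(12+\tfrac{9}{50}(M+1)\Bigr)\Bigl(2(M+1)+\tfrac{27}{25}M\Bigr)-\tfrac{36}{25}(M+1)^{2},
\end{align*}
which is negative already for $M=100$; since the theorem permits any fixed condition-number bound $\Lambda$, such an $\mathbb{A}$ is admissible and $f$ is not convex, so the convexity route to sequential weak lower semicontinuity does not go through.

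A second gap precedes the convexity question: the Euler--Lagrange equation of $\mathcal{F}$ is not \eqref{eq:eq} in the first place unless $\mathbb{A}$ is scalar. Computing the first variation,
\begin{align*}
\skp{\mathcal{F}'(v)}{\phi}=\int_{\Omega}\Bigl[(p-2)\abs{\nabla v}^{p-4}\skp{\mathbb{A}\nabla v}{\nabla v}\skp{\nabla v}{\nabla\phi}+2\abs{\nabla v}^{p-2}\skp{\mathbb{A}\nabla v}{\nabla\phi}\Bigr]\dx-\int_{\Omega}\skp{\abs{G}^{p-2}\mathbb{A}G}{\nabla\phi}\dx,
\end{align*}
so a critical point of $\mathcal{F}$ carries the extra flux term $(p-2)\abs{\nabla v}^{p-4}\skp{\mathbb{A}\nabla v}{\nabla v}\,\nabla v$, which collapses into $p\abs{\nabla v}^{p-2}\mathbb{A}\nabla v$ only when $\mathbb{A}(x)=a(x)\identity$. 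Even a successful minimization would therefore not yield a weak solution of \eqref{eq:eq}. Likewise, the monotonicity inequality you (and the paper) invoke for uniqueness fails on the same example: for $p=4$, $\mathbb{A}=\diag(1,M)$ and $\xi=(1,1)/\sqrt{2}$, the symmetrized Jacobian of $\xi\mapsto\abs{\xi}^{2}\mathbb{A}\xi$ is $\begin{pmatrix}2 & (M+1)/2 \\ (M+1)/2 & 2M\end{pmatrix}$, indefinite for $M>7+4\sqrt{3}$. These defects are present in the paper's own terse proof as well, so you are not deviating from it---you have faithfully reproduced an argument whose key steps need justification or repair for $p\neq 2$. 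For $p=2$ the extra flux term vanishes and the integrand is a weighted quadratic form, and for the multiplier problem \eqref{eq:eqw} the energy $\frac{1}{p}\int_{\Omega}\abs{\setM\nabla v}^{p}\dx$ is genuinely convex with \eqref{eq:eqw} as its Euler--Lagrange equation; in those cases the outline you give is complete.
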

\begin{proof}
By Corollary \ref{cor:SP}, \eqref{eq:cap00} implies \eqref{eq:wSPOmega0}. Then we obtain the existence by the standard theory of calculus of variations. The energy estimate \eqref{eq:en.est} follows from testing $u\in W^{1,p}(\Omega,\mathcal{D};\omega dx)$ to \eqref{eq:eq} and using \eqref{eq:equivalent} together with Young's inequality. Then from
\eqref{eq:equivalent} we have the monotonicity
\begin{align*}
\skp{\abs{\xi}^{p-2}\mathbb{A}(x)\xi-\abs{\eta}^{p-2}\mathbb{A}(x)\eta}{\xi-\eta}\gtrsim \omega(x)\abs{\xi-\eta}\quad(\xi,\eta\in\setR^n)
\end{align*}
and so the uniqueness follows in a standard way.
\end{proof}

We point out that the Poincar\'{e} inequality \eqref{eq:wSPOmega0} is used not only modifying the problem \eqref{eq:eq0} to \eqref{eq:eq}, but also used for the proof of existence of weak solution of \eqref{eq:eq}.

\begin{remark}
 The existence result could be obtained under the different assumption on the weight: ~$\omega, \omega^{-1}\in L^1(\Omega)$ using the results from Zhikov~\cite[Section 1.5]{Zhi11}. This situation is more difficult since smooth functions are in general not dense in~$W^{1,2}(\Omega,\omega dx)$ and presence of Lavrentiev phenomenon. However, these challenges do not impede the validity of the existence theorem.
\end{remark}

\subsection{Higher integrability of the gradient of the solution}
Here, we impose an additional assumption to prove higher integrability of the gradient of the solution. By \cite[Lemma 3.6]{MenPhu12}, there exists $q_2=q_2([\omega]_{\mathcal{A}_p})\in(1,p)$ such that $\omega\in \mathcal{A}_{q_2}$. Let us define
\begin{align*}
q:=
\begin{cases}
\max\left\{q_2,\frac{p+1}{2}\right\}&\quad\text{if }p\in\left(1,\frac{n}{n-1}\right],\\
\max\left\{q_2,\frac{np}{n+p}\right\}&\quad\text{if }p\in\left(\frac{n}{n-1},\infty\right).
\end{cases}
\end{align*}
Then we consider the following assumption.
\begin{assumption}\label{assump:1}
Let $r_0>0$ be given. for any $x_0\in \mathcal{D}$ and $r\in(0,r_0)$, we have
\begin{align}\label{eq:cen}
\dfrac{\textrm{Cap}_{q,\omega}(\mathcal{D}\cap Q_{r}(x_0),\setR^n)}{\omega(Q_r(x_0))}\geq c_0 r^{-q}
\end{align}
for some $c_0>0$ independent of $x_0$ and $r$.	
\end{assumption}

Now we give the following theorem.
\begin{theorem}[Higher integrability]\label{thm:hi} 
Let $\Omega\subset\setR^n$ be a bounded $(L,R_0)$-Lipschitz domain, $1<p<\infty$, $\omega\in\mathcal{A}_p(\Omega)$, and assume \eqref{eq:cen}. If $G\in L^{p+{\delta_0}}(\Omega;\omega dx)$ with $\delta_0>0$, then there exist $\delta_1=\delta_1(n,\delta_0)>0$ such that the following holds: the weak solution $u\in W^{1,p}(\Omega,\mathcal{D};\omega dx)$ of \eqref{eq:eq} satisfies $\abs{\nabla u}\in L^{p+\delta}(\Omega;\omega dx)$ for any  $\delta\leq\delta_1$. Moreover, the estimate
	\begin{align}\label{eq:hi}
		\int_{\Omega}|\nabla u|^{p+\delta}\omega(x)\,dx\leq c\int_{\Omega}|G|^{p+\delta}\omega(x)\,dx
	\end{align}
holds for some $c=c(n,p,\Lambda,[\omega]_{\mathcal{A}_p},q,L,R_0,|\Omega|,c_0,r_0)>0$.
\end{theorem}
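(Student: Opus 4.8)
The statement is a Meyers/Gehring-type higher-integrability result, so the plan is the classical three-step scheme: first derive a reverse Hölder inequality for $|\nabla u|$ with a sub-$p$ exponent on every sufficiently small cube, \emph{uniformly} up to the boundary; then self-improve it by a weighted Gehring lemma; finally globalize via a finite cover of $\overline\Omega$ and the a priori bound \eqref{eq:en.est}. The exponent used on the right-hand side of the reverse Hölder inequality is the $q<p$ fixed right before Assumption~\ref{assump:1}, which was chosen precisely so that $\omega\in\mathcal A_q$ (since $q\ge q_2$) and $q$ lies in the admissible range of both Lemma~\ref{lem:SP0} and Proposition~\ref{prop:SP}. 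Throughout we extend $|\nabla u|$ and $|G|$ by $0$ outside $\Omega$, and we repeatedly use that $\omega\,dx$ is doubling, so that $\omega(Q)\eqsim\omega(\sigma Q)\eqsim\omega(Q\cap\Omega)$ for any fixed $\sigma>1$ and any cube $Q$ centered near $\partial\Omega$ (the last comparison combines $|Q\cap\Omega|\gtrsim|Q|$, which holds as $\Omega$ is Lipschitz, with \eqref{eq:BU}).

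\textbf{Caccioppoli and reverse Hölder.} For a cube $Q_\rho=Q_\rho(x_0)$ of small side, a cut-off $\eta\in C_0^\infty(Q_{2\rho})$ with $\eta\equiv1$ on $Q_\rho$ and $|\nabla\eta|\lesssim\rho^{-1}$, and a constant $a$, test \eqref{eq:eq} with $\varphi=\eta^p(u-a)$; using \eqref{eq:equivalent}, the bounds $|\mathbb A(x)\xi|\le\omega(x)|\xi|$ and $\langle\mathbb A(x)\xi,\xi\rangle\ge\Lambda^{-1}\omega(x)|\xi|^2$, and Young's inequality, one obtains
\[
\int_{Q_\rho\cap\Omega}|\nabla u|^p\,\omega\,dx\;\lesssim\;\int_{Q_{2\rho}\cap\Omega}\Bigl|\tfrac{u-a}{\rho}\Bigr|^p\,\omega\,dx+\int_{Q_{2\rho}\cap\Omega}|G|^p\,\omega\,dx .
\]
Now one inserts the appropriate Sobolev--Poincaré inequality from Section~\ref{sec:3.1}. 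If $2Q_\rho\cap\mathcal D=\emptyset$ and $Q_{2\rho}\Subset\Omega$, take $a$ to be the $\omega$-weighted mean of $u$ on $Q_{2\rho}$ and apply \eqref{eq:SP1} of Lemma~\ref{lem:SP0}. If $2Q_\rho\cap\mathcal D=\emptyset$ but $Q_{2\rho}$ meets the Neumann part, a local bi-Lipschitz flattening of $\partial\Omega$ followed by even reflection across $\{x_n=0\}$ turns $u$ into a $W^{1,q}$ function on a full cube solving an equation of the same structural type with an $\mathcal A_p$-weight of comparable constant, reducing matters to the interior case (alternatively one uses the weighted Lipschitz extension \cite{Chu92}, as in Corollary~\ref{cor:SP}). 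If $2Q_\rho\cap\mathcal D\ne\emptyset$, pick $y_0\in 2Q_\rho\cap\mathcal D$, work on a concentric pair of cubes $Q_{c\rho/2}(y_0)\subset Q_{c\rho}(y_0)$ with $c$ fixed and $Q_{c\rho/2}(y_0)\supset Q_{2\rho}(x_0)$, and apply Proposition~\ref{prop:SP}(a) (and the density remark following it) with $a=0$ and $K=\mathcal D\cap\overline{Q_{c\rho/2}(y_0)}$, which is legitimate since $u$ vanishes in a neighborhood of $\mathcal D$. The constant in \eqref{eq:cap} is then uniform: by monotonicity of capacity under enlargement of the ambient set, by \eqref{eq:cen} applied at $y_0$ with radius $c\rho/2<r_0$, and by doubling,
\[
\textrm{Cap}_{q,\omega}\bigl(K,\,Q_{c\rho}(y_0)\bigr)\;\ge\;\textrm{Cap}_{q,\omega}\bigl(K,\,\setR^n\bigr)\;\gtrsim\;c_0\,(c\rho)^{-q}\,\omega\bigl(Q_{c\rho}(y_0)\bigr),
\]
so the constant in \eqref{eq:cap} is $\lesssim c_0^{-1/q}$, independent of $\rho$ and $y_0$; a cube meeting both $\mathcal D$ and $\mathcal N$ is thus always handled by this Dirichlet branch. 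Combining the Caccioppoli inequality, the chosen Sobolev--Poincaré inequality, and the doubling comparisons, there is $\rho_*=\rho_*(R_0,r_0)>0$ and $\sigma>1$ fixed such that for every cube $Q$ with side $\le\rho_*$ meeting $\overline\Omega$,
\[
\Bigl(\tfrac1{\omega(Q)}\int_Q|\nabla u|^p\,\omega\,dx\Bigr)^{1/p}\le C\Bigl(\tfrac1{\omega(\sigma Q)}\int_{\sigma Q}|\nabla u|^q\,\omega\,dx\Bigr)^{1/q}+C\Bigl(\tfrac1{\omega(\sigma Q)}\int_{\sigma Q}|G|^p\,\omega\,dx\Bigr)^{1/p},
\]
with $C$ depending only on the constants listed in the statement.

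\textbf{Gehring and globalization.} Since $\omega\,dx$ is a doubling measure, the weighted Gehring lemma (self-improvement of reverse Hölder inequalities) applies to the last display and yields an exponent $\delta_1>0$, which we may take $\le\delta_0$, such that for all $\delta\le\delta_1$,
\[
\Bigl(\tfrac1{\omega(Q)}\int_Q|\nabla u|^{p+\delta}\,\omega\,dx\Bigr)^{\frac1{p+\delta}}\lesssim\Bigl(\tfrac1{\omega(\sigma Q)}\int_{\sigma Q}|\nabla u|^{p}\,\omega\,dx\Bigr)^{\frac1p}+\Bigl(\tfrac1{\omega(\sigma Q)}\int_{\sigma Q}|G|^{p+\delta}\,\omega\,dx\Bigr)^{\frac1{p+\delta}}.
\]
Covering $\overline\Omega$ by finitely many cubes of side $\le\rho_*$ (possible by compactness), summing over the cover, and absorbing the $\int|\nabla u|^p\omega$ term by the global energy estimate \eqref{eq:en.est}, one arrives at \eqref{eq:hi}.

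\textbf{Main obstacle.} The crux is the reverse Hölder inequality at cubes meeting $\mathcal D$: namely, showing that the Sobolev--Poincaré constant there is uniform in position and scale. This is exactly where the scale-invariant capacity density \eqref{eq:cen} is used, and the real work is to reconcile its formulation (ambient space $\setR^n$, cube centered at a point of $\mathcal D$, radius $r<r_0$) with the form required by Proposition~\ref{prop:SP}(a) (ambient cube, possibly off-center), via capacity monotonicity and the doubling of $\omega$, and to verify that the $\mathcal A_p$ constant and the capacity density survive the local bi-Lipschitz flattening while even reflection across the Neumann part produces a weak solution of a structurally identical equation. Everything else — the Caccioppoli estimate, the reduction of Neumann cubes, and the weighted Gehring lemma — is by now routine once the Sobolev--Poincaré inequalities of Section~\ref{sec:3.1} are in hand.
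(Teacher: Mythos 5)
Your proposal is correct and follows essentially the same scheme as the paper's proof: Caccioppoli on cubes, weighted Sobolev--Poincar\'{e} with uniform constant deduced from the scale-invariant capacity density \eqref{eq:cen} and capacity monotonicity in the ambient set, local flattening plus even reflection to reach a full cube near the Neumann part, weighted Gehring, and a finite covering combined with \eqref{eq:en.est}. The one place you are slightly less careful than the paper is the branch with $2Q_\rho\cap\mathcal D\neq\emptyset$: Proposition~\ref{prop:SP}(a) is stated for a function on an entire cube, whereas $u$ is only defined on $\Omega$, so (exactly as the paper does, and as you already do for the Neumann branch) one should first flatten and evenly reflect so that the reflected solution $\widehat u$ lives on a full cube and vanishes near $\widetilde{\mathcal D}_R$, and only then invoke Proposition~\ref{prop:SP}(a) together with the density remark; once that reordering is made explicit your argument coincides with the paper's.
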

\begin{proof}
We first choose $x_0\in\partial\Omega$. Then since $\Omega$ is $(L,R)$--Lipschitz, there exists a coordinate system $\{x_1,\dots,x_n\}$ and Lipschitz map $\psi:\setR^{n-1}\rightarrow\setR$ such that $x_0=0$ in this coordinate system, and there holds
\begin{align*}
\Omega\cap Q_R(x_0)=\{x=(x_1,\dots,x_n)=(x',x_n)\in Q_R(x_0):x_n>\psi(x')\}
\end{align*}
together with $\|\nabla\psi\|_{\infty}\leq L$. Let us write $Q_{R}=Q_{R}(x_0)$ and define $\Psi:\setR^n\rightarrow\setR^n$ as 
\begin{align*}
\Psi(x',x_n)=(x',x_n-\psi(x'))\,\,\text{for}\,\,(x',x_n)\in\setR^n.
\end{align*}
Then $\Psi$ is invertible, and we observe following facts from \cite{AlkCheMaz22}:
\begin{itemize}
	\item $\Psi(\partial\Omega\cap Q_{R})\subset T_{R}:=\{y:|y_i|<R\,\,\text{for}\,\,i=1,\dots,n-1,y_n=0\}$,
	\item $\widetilde{Q}_R:=Q_{(1+\sqrt{n-1}L)^{-1}R}\subset\Psi(Q_{R})\subset Q_{(1+\sqrt{n-1}L)R}$, and
	\item $\widetilde{Q}_R^+:=Q^+_{(1+\sqrt{n-1}L)^{-1}R}\subset\Psi(\Omega\cap Q_{R})$.
\end{itemize}
Now by transformation $y=\Psi(x)$, the problem \eqref{eq:eq} locally takes of the form
\begin{align}\label{eq:eq2}
\begin{split}
&-\divergence\left((\nabla\Psi(y))^t\widetilde{\mathbb{A}}(y)\abs{\nabla\Psi(y)\nabla \widetilde{u}(y)}^{p-2}(\nabla\Psi(y)\nabla \widetilde{u}(y))\right)\\
&\quad\quad\quad\quad\quad=-\divergence\left((\nabla\Psi(y))^t\widetilde{\mathbb{A}}(y)\abs{\widetilde{G}(y)}^{p-2}\widetilde{G}(y)\right)\quad\text{in }\widetilde{Q}^+_R
\end{split}
\end{align}
with
\begin{align*}
\begin{split}
\widetilde{u}=0\quad\text{on }\widetilde{\mathcal{D}}_R\quad\text{and}\quad
\dfrac{\partial \widetilde{u}}{\partial \widetilde{\nu}}=0\quad\text{on }\widetilde{\mathcal{N}}_{R}.
\end{split}
\end{align*}
Here, we list definitions of each quantities and its properties below:
\begin{itemize}
	\item We have $\widetilde{\mathbb{A}}(y):=(\mathbb{A}\circ\Psi^{-1})(y)$ and  $\widetilde{\omega}(y):=\abs{\widetilde{\mathbb{A}}(y)}\in \mathcal{A}_p$, since $\widetilde{Q}_R:=Q_{(1+\sqrt{n-1}L)^{-1}R}\subset\Psi(Q_{R})\subset Q_{(1+\sqrt{n-1}L)R}$ and $\psi$ is a Lipschitz map and so for any $Q_{R}$,
	\begin{align*}
	&\left(\dashint_{\widetilde{Q}_R}\widetilde{\omega}(y)\,dy\right)\left(\dashint_{\widetilde{Q}_R}\widetilde{\omega}(y)^{-\frac{1}{p-1}}\,dy\right)^{p-1}\\
	&\quad=\left(\dashint_{\Psi(Q_R)}\omega(x)|\nabla\Psi(x)|\,dx\right)\left(\dashint_{\Psi(Q_R)}\omega(x)^{-\frac{1}{p-1}}|\nabla\Psi(x)|\,dx\right)^{p-1}\\
	&\quad\lesssim \left(\dashint_{Q_{(1+\sqrt{n-1}L)R}}\omega(x)\,dx\right)\left(\dashint_{Q_{(1+\sqrt{n-1}L)R}}\omega(x)^{-\frac{1}{p-1}}\,dx\right)^{p-1}\leq [\omega]_{\mathcal{A}_p}
	\end{align*}
	with the implicit constant $c=c(n,L)>0$.
	\item We denote $\widetilde{\mathcal{D}}_{R}=\Psi(\mathcal{D}\cap Q_R)\cap \widetilde{Q}_{R}$ and $\widetilde{\mathcal{N}}_{R}=\Psi(\mathcal{N}\cap Q_{R})\cap \widetilde{Q}_{R}$.
	\item Note that $\widetilde{u}(y):=(u\circ\Psi^{-1})(y)\in W^{1,p}(\widetilde{Q}^+_{R},\widetilde{\mathcal{D}}_R;\widetilde{\omega} dy)$, and $\widetilde{G}(y):=(G\circ\Psi^{-1})(y)\in L^p(\widetilde{Q}^+_{R};\widetilde{\omega} dy)$ by the fact that $\psi$ is a Lipschitz map.
	\item The outer conormal derivative of $\widetilde{u}$ is denoted by $\frac{\partial\widetilde{u}}{\partial{\tilde{\nu}}}$, and since $\Psi(\partial\Omega\cap Q_{R})\subset T_{R}$, we have $\frac{\partial\widetilde{u}}{\partial{\tilde{\nu}}}=\partial_{n}\widetilde{u}$.
	%\item The outer conormal derivative of $\widetilde{u}$ is
	%\begin{align*}
	%0=\frac{\partial \widetilde{u}}{\partial\widetilde{\nu}}=\widetilde{\nu}\cdot\nabla_y\widetilde{u}&=\widetilde{\nu}\cdot(\nabla u)(\Psi^{-1}(y))\nabla\Psi^{-1}(y)\\
	%&=e_n\cdot\sum_{k=1}^n(\nabla_{k}u)(\Psi^{-1}(y))(\nabla\Psi^{-1}(y))_{kn}\\
	%&=(\nabla_{n} u)(\Psi^{-1}(y)),
	%\end{align*}
	%where for the last inequality we have used the facts that the outward conormal direction is $\widetilde{\nu}=e_n=(0,\dots,0,1)$ by $\Psi(\partial\Omega\cap Q_{R})\subset T_{R}$, and $(\nabla\Psi^{-1}(y))_{nn}=1$. Then
	%\begin{align*}
	%\nabla_{n}(\widetilde{})
	%\end{align*}
\end{itemize}

We use the even extension for the weight $\widetilde{\omega}(y)$ and $\widetilde{u}$ with respect to $\{y:y_n=0\}$. Let us write the extended function of $\widetilde{\omega}$ as $\widehat{\omega}$, and $\widetilde{u}$ as $\widehat{u}$. Then $\widehat{\omega}(y)\in\mathcal{A}_p$ and $\widehat{u}\in W^{1,p}(\widetilde{Q}_{R},\widetilde{\mathcal{D}}_R;\widehat{\omega} dy)$ satisfies the following equation:
\begin{align}\label{eq:eq3}
\begin{split}
-\divergence\left(\abs{\widehat{\nabla} \widehat{u}(y)}^{p-2}\widehat{\mathbb{A}}(y)\nabla \widehat{u}(y)\right)=-\divergence\left(\widehat{G}(y)\right)\quad\text{in }\widetilde{Q}_R
\end{split}
\end{align}
with
\begin{align*}
\begin{split}
\widehat{u}=0\quad\text{on }\widetilde{\mathcal{D}}_R.
\end{split}
\end{align*}
Here, let us list definitions of each quantities and properties; all of the extensions mentioned below are with respect to $\{y:y_n=0\}$.
\begin{itemize}
	\item We define $\widehat{\nabla} \widehat{u}(y)$ as an odd extension of $\nabla\Psi(y)\nabla \widetilde{u}(y)$, and note that since $\psi$ is Lipschitz, $|\widehat{\nabla} \widehat{u}(y)|\eqsim |\nabla \widetilde{u}(y)|$ holds.
	\item The coefficient matrix $A(y):=(\nabla\Psi(y))^t\widetilde{\mathbb{A}}(y)(\nabla\Psi(y))$ is such that $A_{in}(y)=A_{ni}(y)$ for $i\neq n$ are odd extensions of $[(\nabla\Psi(y))^t\widetilde{\mathbb{A}}(y)(\nabla\Psi(y))]_{in}$, and all other elements of $A_{ij}(y)$ are even extensions of $[(\nabla\Psi(y))^t\widetilde{\mathbb{A}}(y)(\nabla\Psi(y))]_{ij}$. Note that $A(y)$ is uniformly elliptic.
	\item We define $\widehat{G}(y)=(\widehat{G}_1(y),\dots,\widehat{G}_n(y))\in L^p(\widetilde{Q}^+_{R};\widetilde{\omega} dy)$ as such that $\widehat{G}_i(y)$ are even extensions of $((\nabla\Psi(y))^t|\widetilde{G}(y)|^{p-2}\widetilde{\mathbb{A}}(x)\widetilde{G}(y))_i$ for $i\neq n$, and $\widehat{G}_n(y)$ are the odd extension of $((\nabla\Psi(y))^t|\widetilde{G}(y)|^{p-2}\widetilde{\mathbb{A}}(x)\widetilde{G}(y))_n$. Note that 
	\begin{align*}
	|\widehat{G}|\eqsim \omega|\widetilde{G}|^{p-1}.
	\end{align*}
\end{itemize}
Then one can see that $\widehat{u}\in W^{1,p}(\widetilde{Q}_{R},\widetilde{\mathcal{D}}_R;\widehat{\omega} dy)$ from \eqref{eq:eq3} satisfies the following
\begin{align}\label{eq:eq5}
\int_{\widetilde{Q}_{R}}\widehat{\omega}(y)A(y)\abs{\widehat{\nabla} \widehat{u}}^{p-2}\nabla \widehat{u}\cdot\nabla \phi\,dy=\int_{\widetilde{Q}_{R}}\widehat{\omega}(y)\widehat{G}\cdot \nabla \phi\,dy
\end{align}
for any test functions $\phi\in W^{1,p}_0(\widetilde{Q}_{R},\widetilde{\mathcal{D}}_R;\widehat{\omega} dy)$.	
	
From now on, we suppose that
	\begin{align*}
		y_0\in \widetilde{Q}_{R/2}\setminus\partial \widetilde{Q}_{R/2},\quad\text{and }r\leq\frac{1}{2}\distance(y_0,\partial \widetilde{Q}_{R/2}).
	\end{align*}
First, consider the case when $Q_{3r/2}(y_0)\subset \widetilde{Q}_{R}\setminus \widetilde{\mathcal{D}}_{R}$ and test $\phi=(\widehat{u}-\lambda)\eta^p$ in \eqref{eq:eq5}, where $\lambda=\mean{\widehat{u}}_{Q_{3r/2}(y_0)}$, and the cutoff function $\eta\in C^{\infty}_0(Q_{3R/2}(y_0))$ satisfies $0\leq\eta\leq 1$, $\eta\equiv 1$ in $Q_{r}(y_0)$ and $|\nabla \eta|\leq c/r$. As a result, by \eqref{eq:eq5} and Young's inequality together with $|\widehat{G}|\eqsim |\widetilde{G}|^{p-1}$, we arrive at the Caccioppoli inequality
	\begin{align}\label{eq:hi3}
		\int_{Q_{r}(y_0)}|\nabla \widehat{u}|^p\widehat{\omega} \,dy\leq c(n,p,\Lambda)\left(\int_{Q_{3r/2}(y_0)}\left|\frac{\widehat{u}-\lambda}{r}\right|^p\widehat{\omega}\,dy+\int_{Q_{3r/2}(y_0)}|\widehat{G}|^p\widehat{\omega}\,dy\right).
	\end{align}
	Further, from Lemma \ref{lem:SP0}, together with $\widehat{\omega}(Q_r)\eqsim \widehat{\omega}(Q_{3r/2})$ from \eqref{eq:UB} and \eqref{eq:BU}, we have
	\begin{align*}
		\left(\frac{1}{\widehat{\omega}(Q_r)}\int_{Q_{3r/2}(y_0)}\left|\frac{\widehat{u}-\lambda}{r}\right|^{p}\widehat{\omega}\,dx\right)^{1/{p}}\leq c(n)\left(\frac{1}{\widehat{\omega}(Q_r)}\int_{Q_{3r/2}(y_0)}|\nabla \widehat{u}|^{q}\widehat{\omega}\,dx\right)^{1/{q}}.
	\end{align*}
	The above two inequalities yield that
	\begin{align}\label{eq:hi3.1}
		\begin{split}
			&\left(\frac{1}{\widehat{\omega}(Q_r)}\int_{Q_r(y_0)}|\nabla \widehat{u}|^p\widehat{\omega}\,dy\right)^{1/p}\\
			&\quad\leq c\left(\left(\frac{1}{\widehat{\omega}(Q_r)}\int_{Q_{2r}(y_0)}|\nabla \widehat{u}|^{q}\widehat{\omega}\,dy\right)^{1/q}+\left(\frac{1}{\widehat{\omega}(Q_r)}\int_{Q_{2r}(y_0)}|\widehat{G}|^p\widehat{\omega}\,dy\right)^{1/p}\right).
		\end{split}
	\end{align}

	Next we consider the case when $Q_{3r/2}(y_0)\cap \widetilde{\mathcal{D}}_{R}\neq \emptyset$. Choosing a test function $\phi=u\eta^p$ in \eqref{eq:eq5}, we find the similar estimate to \eqref{eq:hi3} as follows:
	\begin{align}\label{eq:hi4}
		\int_{Q_r(y_0)}|\nabla \widehat{u}|^p\widehat{\omega}\,dy\leq c(n,p,\Lambda)\left(\int_{Q_{2r}(y_0)}\left|\frac{\widehat{u}}{r}\right|^p\widehat{\omega}\,dy+\int_{Q_{2r}(y_0)}|\widehat{G}|^p\widehat{\omega}\,dy\right).
	\end{align}
	Now we apply the Sobolev-Poincar\'{e} inequality (Proposition \ref{prop:SP}) with the assumption \eqref{eq:cen}. Since $Q_{3r/2}(y_0)\cap\widetilde{\mathcal{D}}_{R}\neq\emptyset$, there exists a point $z_0\in Q_{3r/2}(y_0)\cap\widetilde{\mathcal{D}}_{R}$ such that $\overline{Q}_{r/2}(z_0)\subset\overline{Q}_{2r}(y_0)$. Let $z=\Psi^{-1}(z_0)\in \mathcal{D}\cap Q_{R}$. Note that there is a cube $Q_{cr}(z)$ with some $c=c(L,n)>0$ such that $Q_{cr}(z)\subset\Psi^{-1}(\overline{Q}_{r/2}(z_0))$. By \eqref{eq:cen},
	\begin{align}\label{eq:cen1}
		\dfrac{1}{\int_{Q_{cr}(z)}\omega\,dx}\textrm{Cap}_{q,\omega}(\mathcal{D}\cap Q_{cr}(z),\setR^n)\geq c r^{-q}
	\end{align}
	holds, where $c=c(n,L,c_0)$. Then considering the transformation with respect to $\Psi$, it follows that
	\begin{align}\label{eq:cen2}
		\dfrac{1}{\int_{Q_{2r}}\omega\,dx}\textrm{Cap}_{q,\omega}(\widetilde{\mathcal{D}}_{R},\setR^n)\geq c r^{-q}.
	\end{align}
	Hence we obtain
	\begin{align*}
		\left(\frac{1}{\widehat{\omega}(Q_r)}\int_{Q_{2r}(y_0)}\left|\frac{\widehat{u}}{r}\right|^p\widehat{\omega}\,dy\right)^{\frac{1}{p}}\leq c\left(\frac{1}{\widehat{\omega}(Q_r)}\int_{Q_{2r}(y_0)}|\nabla \widehat{u}|^{q}\widehat{\omega}\,dy\right)^{\frac{1}{q}}.
	\end{align*}

	Combining with \eqref{eq:hi4}, we again obtain \eqref{eq:hi3.1}. Now we use Gehring's lemma \cite[Theorem 1]{MasMil00} with the weight $\widehat{\omega}$ to obtain
	\begin{align*}
		\begin{split}
			&\left(\frac{1}{\widehat{\omega}(Q_r)}\int_{Q_r(y_0)}|\nabla \widehat{u}|^{p+\delta}\widehat{\omega}\,dy\right)^{1/(p+\delta)}\\
			&\quad\leq c\left(\left(\frac{1}{\widehat{\omega}(Q_r)}\int_{Q_{2r}(y_0)}|\nabla \widehat{u}|^{p}\widehat{\omega}\,dy\right)^{1/p}+\left(\frac{1}{\widehat{\omega}(Q_r)}\int_{Q_{2r}(y_0)}|\widehat{G}|^{p+\delta}\widehat{\omega}\,dy\right)^{1/(p+\delta)}\right).
		\end{split}
	\end{align*}
	Multiplying $\widehat{\omega}(Q_r)^{1/p}$ to both sides of the above estimate, we find that
\begin{align*}
\begin{split}
&\left(\int_{Q_r(y_0)}|\nabla \widehat{u}|^{p+\delta}\widehat{\omega}\,dy\right)^{1/(p+\delta)}\\
&\quad\leq\frac{c}{\widehat{\omega}(Q_r)^{1/(p+\delta)-1/p}}\left(\int_{Q_{2r}(y_0)}|\nabla \widehat{u}|^{p}\widehat{\omega}\,dy\right)^{1/p}+c\left(\int_{Q_{2r}(y_0)}|\widehat{G}|^{p+\delta}\widehat{\omega}\,dy\right)^{1/(p+\delta)}.
\end{split}
\end{align*}
Since the function $\widehat{u}$ was the even extension of $\widetilde{u}$ with respect to $\{y:y_n=0\}$, together with using the construction of $\widehat{\omega}$ and $\widehat{G}$ from $\widetilde{\omega}$ and $\widetilde{G}$ respectively, the above estimate implies
\begin{align*}
\begin{split}
&\left(\int_{Q_r(y_0)}|\nabla \widetilde{u}|^{p+\delta}\widetilde{\omega}\,dy\right)^{1/(p+\delta)}\\
&\quad\leq c\left(\int_{Q_{2r}(y_0)}|\nabla \widetilde{u}|^{p}\widetilde{\omega}\,dy\right)^{1/p}+c\left(\int_{Q_{2r}(y_0)}|\widetilde{G}|^{p+\delta}\widetilde{\omega}\,dy\right)^{1/(p+\delta)}
\end{split}
\end{align*}
with $c=c(n,p,\Lambda,[\omega]_{\mathcal{A}_p},c_0,L,R_0)>0$. Now using the covering argument together with the following interior estimate
	\begin{align*}
		\begin{split}
			&\left(\int_{Q_r}|\nabla \widetilde{u}|^{p+\delta}\widetilde{\omega}\,dy\right)^{1/(p+\delta)}\\
			&\quad\leq c\left(\int_{Q_{2r}}|\nabla \widetilde{u}|^{p}\widetilde{\omega}\,dy\right)^{1/p}+c\left(\int_{Q_{2r}}|\widetilde{G}|^{p+\delta}\widetilde{\omega}\,dy\right)^{1/(p+\delta)}
		\end{split}
	\end{align*}
	with $Q_{2r}\Subset\Omega$, we obtain the conclusion.
\end{proof}

\begin{remark}
	Inspired by the example from~\cite{AlkCheMaz22} for uniformly elliptic weights, we give the example of the set $F$ which has zero $1$-dimensional measure when $n=2$, but satisfying \eqref{eq:cen} in the case of $p=2$. 
	We choose $\omega(x)=\abs{x}^{1/2}\in\mathcal{A}_2(Q_1)$, $q=3/2$ and $q_0=13/10$. Then we have
	\begin{itemize}
		\item $1<q_0<q$, and
		\item $\omega\in\mathcal{A}_q$ and $\omega\in\mathcal{A}_{q_0}$.
	\end{itemize}

	We begin with the one dimensional generalized Cantor
set~$\frC_\lambda$ with $\lambda \in (0,\frac 12)$, which is also
known as the (1-2$\lambda$)-middle Cantor set. 
\begin{enumerate}
	\item We start with the
	interval~$\frC_{\lambda,0} := (-\frac 12,\frac 12)$.
	\item We
	define~$\frC_{\lambda,k+1}$ inductively by removing the
	middle~$1-2\lambda$ parts from~$\frC_{\lambda,k}$.
	\item In particular, we
	define~$\frC_\lambda := \cap_{k \geq 1} \frC_{\lambda,k}$.
\end{enumerate}
Then the fractal (Hausdorff) dimension
of~$\frC_\lambda$ is
$\frD=\dim(\frC_\lambda) = \log(2)/ \log(1/\lambda) \in (0,1)$,
i.e. $\lambda = 2^{-\frD}$. The set $\frC_\lambda$ has clearly zero 1-dimensional measure for any $\lambda\in(0,\frac{1}{2})$.
Now define
\begin{itemize}
	\item $\lambda\in(0,1/2)$ be such that 
	\begin{align*}
	\dfrac{\log2}{-\log\lambda}>2-\dfrac{q}{q_0},\quad\text{i.e.,}\quad \lambda>2^{-\frac{13}{11}},
	\end{align*}
	\item $l_i=\lambda l_{i-1}$ for $i\in\setN$, and
	\item $F=\frC_\lambda$.
\end{itemize}
Then Since the Hausdorff dimension of $F$ satisfies $\mathcal{H}(F)>2-\frac{q}{q_0}$, by \cite[Corollary 2.33]{HeiKilMar06} we have 	
	\begin{align}\label{eq:cap>0}
	Cap_{q,\omega}(F,\setR^2)>0.
	\end{align}

	Now it remains to show that \eqref{eq:cen} holds. Fix $x_0\in F$ and $r\in(0,r_0)$. Recall the definition of the capacity of the set $F\cap Q_r(x_0)$:
	\begin{align}\label{eq:capq}
	\text{Cap}_{q,\omega}(F\cap Q_r(x_0),\setR^2)=\inf\left\{\int_{\setR^2}|\nabla \phi|^{q}\omega\,dx:\phi\in C^{\infty}_0(\setR^2),\phi\geq 1\,\,\text{on}\, F\cap Q_r(x_0)\right\}.
	\end{align}
	If $r\leq r_0\leq 1/3$, then there is a natural number $l_0$ such that $3^{-l_0-1}<r\leq 3^{-l_0}$. Since $F\cap [x_0-l_0,x_0+l_0]\subset F\cap Q_r(x_0)$, there holds
	\begin{align}\label{eq:capq2}
	\text{Cap}_{q,\omega}(F\cap Q_r(x_0),\setR^2)\geq \text{Cap}_{q,\omega}(F\cap [x_0-l_0,x_0+l_0],\setR^2).
	\end{align}
	Using \eqref{eq:capq} and the affine transformation
	\begin{align}\label{eq:trans}
	y=(x-x_0)/r+x_0,\,\,\text{where }r=3^{-l_0-2},
	\end{align}
	and employing \eqref{eq:capq2} together with the scaling property \eqref{eq:cap8}, we obtain
	\begin{align}\label{eq:capq3}
	\begin{split}
	\text{Cap}_{q,\omega}(F\cap Q_r(x_0),\setR^2)&\geq 3^{-(l_0+2)/2}\text{Cap}_{q,\omega}(\tilde{F},\setR^2)\\
	&\geq 3^{-1/2}r^{1/2}\text{Cap}_{q,\omega}(\tilde{F},\setR^2)\geq 3^{-1/2}r\text{Cap}_{q,\omega}(\tilde{F},\setR^2),
	\end{split}
	\end{align}
	where $\tilde{F}$ means the image of the set $[x_0-l_0,x_0+l_0]\cap F$ by the transformation \eqref{eq:trans}. We note that under \eqref{eq:trans}, the set $\tilde{F}$ is a shift of the Cantor set $F$ along the $x_1$-axis. Note that 
	\begin{align*}
	\int_{Q_r(x_0)}\omega(x)\,dx=\int_{Q_r(x_0)}|x|^{1/2}\,dx\eqsim r^{5/2}
	\end{align*}
	uniformly on $x_0\in F\subset(-\frac{1}{2},\frac{1}{2})$. Then by \eqref{eq:cap>0} and \eqref{eq:capq3}, \eqref{eq:cen} holds for constants $q=3/2$, $c_0\eqsim \text{Cap}_{q,\omega}(\tilde{F},\setR^2)>0$ and $r_0=1/3$.
\end{remark}

\section{Zaremba problem with degenerate weights as multipliers}\label{sec:4}

In this section we establish Zaremba problem with degenerate weights where the weights are multipliers of the gradient of the solution. First of all, we show an analogous version of Proposition \ref{prop:SP}.

\begin{proposition}\label{prop:SP'}
	Let $K$ be a closed subset of $\overline{Q}_r$ and $\mu^p\in\mathcal{A}_p(\setR^n)$.
	\begin{enumerate}
		\item There exists $p_0=p_0([\mu]_{\mathcal{A}_p})$ such that for each $q\in[p_0,p]$, 
		\begin{align}\label{eq:wSP''}
		\left(\dashint_{Q_r}\left|\dfrac{u}{r}\right|^p\mu^p\,dx\right)^{\frac{1}{p}}\leq C\left(\dashint_{Q_r}|\nabla u|^q\mu^q\,dx\right)^{\frac{1}{q}}
		\end{align}
		holds for all $u\in C^{\infty}(\bar{Q}_r)$ with $\text{dist}(\text{supp}\,u,K)>0$. Moreover, $C$ satisfies
		\begin{align}\label{eq:cap''}
		C\leq \frac{c_2}{r}\left(\dfrac{\mu^q(Q_{2r})}{\text{Cap}_{q,\mu^q}(K,Q_{2r})}\right)^{\frac{1}{q}}
		\end{align}
		with $c_4=c_4(n,p,[\mu^p]_{\mathcal{A}_p})$.
		\item For any $u\in C^{\infty}(\bar{Q}_r)$ with $\text{dist}(\text{supp}\,u,K)>0$, let us assume that
		\begin{align}\label{eq:wSP'''}
		\left(\dashint_{Q_{\frac{r}{2}}}\left|\frac{u}{r}\right|^p\mu^p\,dx\right)^{\frac{1}{p}}\leq C\left(\dashint_{Q_r}|\nabla u|^q\mu^q\,dx\right)^{\frac{1}{q}}
		\end{align}
		holds for some $q\in[p_0,p]$. Then there exists a small $\gamma=\gamma(n,p,[\mu^p]_{\mathcal{A}_p})$ such that if $K\in \mathcal{N}_{\mu^q;\gamma,q}(Q_r)$, we have
		\begin{align}\label{eq:cap'''}
		C\geq \dfrac{c_5}{r}\left(\dfrac{\mu^q(Q_{2r})}{\text{Cap}_{q,\mu^q}(K,Q_{2r})}\right)^{\frac{1}{q}}
		\end{align}
		with $c_5=c_5(n,p,[\mu^p]_{\mathcal{A}_p})$.
	\end{enumerate}
\end{proposition}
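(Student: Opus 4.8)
The plan is to run the proof of Proposition~\ref{prop:SP} essentially word for word, with the single weight $\mu$ there replaced by $\mu^q$ in every gradient integral (and hence in the capacity, which becomes $\text{Cap}_{q,\mu^q}$) and by $\mu^p$ in every integral of the function $u$ itself, and with Lemma~\ref{lem:SP0} replaced by its multiplier analogue Lemma~\ref{lem:SP0'}. Before doing so I would isolate the one genuinely new ingredient: since $\mu^p\in\mathcal{A}_p$ one has $\mu^{-p'}\in\mathcal{A}_{p'}$, so $\mu^{-p'}$ satisfies a reverse H\"older inequality, and combining this (applied to $\mu^{-q'}=(\mu^{-p'})^{q'/p'}$ for $q$ close to $p$) with Jensen's inequality for $\mu^q=(\mu^p)^{q/p}$ produces $p_0=p_0(n,p,[\mu^p]_{\mathcal{A}_p})<p$ with
\[
 \mu^q\in\mathcal{A}_q\quad\text{and}\quad [\mu^q]_{\mathcal{A}_q}\le c(n,p,[\mu^p]_{\mathcal{A}_p})\qquad\text{for all }q\in[p_0,p].
\]
In particular $\mu^q\in\mathcal{A}_p\subset\mathcal{A}_\infty$, and the $\mathcal{A}_\infty$ self-improvement $\dashint_Q(\mu^p)^{q/p}\eqsim(\dashint_Q\mu^p)^{q/p}$ gives $\mu^q(Q)\eqsim|Q|^{1-q/p}\,\mu^p(Q)^{q/p}$ uniformly over cubes $Q$; this is exactly what lets one pass between $\mu^p$- and $\mu^q$-weighted averages at the cost of a constant $c(n,p,[\mu^p]_{\mathcal{A}_p})$. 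I would also enlarge $p_0$ if needed so that Lemma~\ref{lem:SP0'} applies on $[p_0,p]$ and so that Lemma~\ref{lem:SP0} applies to the $\mathcal{A}_q$-weight $\mu^q$ at exponent $q$, which yields the $q$-Poincar\'e inequality for $\mu^q$ and its Sobolev--Poincar\'e refinement.

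With these preliminaries fixed, \textbf{Step~1} of Proposition~\ref{prop:SP} carries over with $\mu$ replaced by $\mu^q$: I would establish
\[
 \text{Cap}_{q,\mu^q}(K,Q_2)\eqsim\inf\Bigl\{\textstyle\sum_{m=0}^{1}\norm{\nabla_m(1-u)}^q_{L^q(Q_1;\mu^q\,dx)}:u\in C^\infty(\overline{Q}_1),\ \text{dist}(\text{supp}\,u,K)>0\Bigr\},
\]
using for the ``$\lesssim$'' direction the same Maz'ya extension operator $A$ as in the proof of Proposition~\ref{prop:SP}, now bounded $L^q(Q_1;\mu^q\,dx)\to L^q(Q_2;\mu^q\,dx)$ because $\mu^q\in\mathcal{A}_q$, and for the ``$\gtrsim$'' direction the $q$-Poincar\'e inequality for $\mu^q$ from Lemma~\ref{lem:SP0}, followed by minimization over admissible functions.

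For part~(a) I would treat $r=1$ first: putting $N^q=\tfrac1{\mu^q(Q_1)}\int_{Q_1}|u|^q\mu^q\,dx$ and running \eqref{eq:tri}, the capacity comparison, and the $\mu^q$-Poincar\'e inequalities exactly as in \eqref{eq:cap3}--\eqref{eq:cap4.11} gives $N^q\,\text{Cap}_{q,\mu^q}(K,Q_2)\le c\int_{Q_1}|\nabla u|^q\mu^q\,dx$; the mixed Sobolev--Poincar\'e inequality \eqref{eq:SP1'} of Lemma~\ref{lem:SP0'} then turns this into the $L^p(\mu^p)$-bound \eqref{eq:wSP''} with $C$ as in \eqref{eq:cap''}, the equivalence $\mu^q(Q_1)\eqsim|Q_1|^{1-q/p}\mu^p(Q_1)^{q/p}$ keeping every constant of the form $c(n,p,[\mu^p]_{\mathcal{A}_p})$. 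The case $r\neq1$ follows by the scaling in \eqref{eq:cap4.1}--\eqref{eq:cap10.1}, since the identities \eqref{eq:cap7}--\eqref{eq:cap8} hold verbatim with $\mu$ replaced by $\mu^q$ and hence the right-hand side of \eqref{eq:cap''} is scaling invariant. Part~(b) is the analogue of \textbf{Step~3}: for $r=1$ I would pick a near-minimiser $\psi\in C^\infty_0(Q_2)$ for $\text{Cap}_{q,\mu^q}(K,Q_2)$, set $u=1-\psi$, insert it into \eqref{eq:wSP'''}, and use \eqref{eq:SP0} for $\mu^q$ together with the doubling bounds \eqref{eq:BU}--\eqref{eq:doubling} for $\mu^q$ to make $\tfrac1{\mu^q(Q_{1/2})}\int_{Q_{1/2}}\psi\mu^q\,dx$ as small as desired once $K\in\mathcal{N}_{\mu^q;\gamma,q}(Q_1)$ for a small $\gamma=\gamma(n,p,[\mu^p]_{\mathcal{A}_p})$, exactly along \eqref{eq:cap6}--\eqref{eq:cap11}; this gives \eqref{eq:cap'''} for $r=1$, and scaling completes the proof.

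The substantive content is concentrated in the preliminary passage $\mu^p\in\mathcal{A}_p\Rightarrow\mu^q\in\mathcal{A}_q$ for $q$ near $p$ and the $\mathcal{A}_\infty$-equivalence $\mu^q(Q)\eqsim|Q|^{1-q/p}\mu^p(Q)^{q/p}$: once these are available, each inequality in the proof of Proposition~\ref{prop:SP} has a direct counterpart here and the mixed-weight structure of Lemma~\ref{lem:SP0'} slots in precisely where the single-weight Lemma~\ref{lem:SP0} was used. Among the remaining routine verifications, I expect the boundedness of the extension operator $A$ between the weighted $L^q(\mu^q)$-spaces --- that is, that property~(iii) of \textbf{Step~1} of Proposition~\ref{prop:SP} survives the substitution --- to be the most delicate point.
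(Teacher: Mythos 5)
Your proposal is correct and follows essentially the same route as the paper: both reduce the argument to the proof of Proposition~\ref{prop:SP} after establishing that $\mu^p\in\mathcal{A}_p$ forces $\mu^\gamma\in\mathcal{A}_\gamma$ with uniformly bounded constant for $\gamma$ in some interval $[\gamma_1,p]$ (the paper cites the reverse H\"older inequality of $\mu^p$ from \cite[Theorem 7.2.2]{Gra14class} together with \cite[Lemma 3.6]{MenPhu12}), and both substitute $\mu^q$ for the gradient measure, $\mu^p$ for the zeroth-order measure, and Lemma~\ref{lem:SP0'} for Lemma~\ref{lem:SP0}. Your explicit isolation of the comparability $\mu^q(Q)\eqsim|Q|^{1-q/p}\mu^p(Q)^{q/p}$ makes transparent a step the paper handles implicitly (it is indeed needed to match the unweighted averages in \eqref{eq:wSP''} against the $\mu^q(Q_{2r})$ appearing in \eqref{eq:cap''}), but the underlying mechanism — reverse H\"older for the $\mathcal{A}_\infty$-weight $\mu^p$ — is the same one the paper invokes.
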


\begin{proof}
	We follow the proof of the above lemma with slight modifications. The proof is proceeded in the case of $r=1$ and then use the scaling. We start the proof by noting that there exists 
	\begin{align}\label{eq:gamma1}
	\gamma_1=\gamma_1(n,p,[\mu^p]_{\mathcal{A}_p})\in(\tfrac{np}{n+p},p)
	\end{align}
	such that $\mu^p\in\mathcal{A}_p$ implies $\mu^{\gamma}\in\mathcal{A}_{\gamma}$ for any $\gamma\in[\gamma_1,p]$. Moreover, $[\mu^{\gamma}]_{\mathcal{A}_{\gamma}}\leq c(n,p,[\mu^p]_{\mathcal{A}_p})$. For its proof, use \cite[Lemma 3.6]{MenPhu12} together with the reverse H\"{o}lder inequality of the weight $\mu^p$ as in \cite[Theorem 7.2.2]{Gra14class}.

	Similar to the proof of \textbf{Step 1} in Proposition \ref{prop:SP}, we see that if $K\in \overline{Q}_1$ is a compact set, then
	\begin{align}\label{eq:cap1'}
	\begin{split}
	&\text{Cap}_{q,\mu^q}(K,Q_2)\\
	&\quad\eqsim\inf\left\{\sum^{1}_{m=0}\norm{\nabla_m(1-u)}^q_{L^q(Q_1;\mu^q dx)}:u\in C^{\infty}(\overline{Q}_1),\text{dist}(\text{supp}\,u,K)>0\right\}
	\end{split}
	\end{align}
	for any $q\in[q_0,p]$ with the implicit constant $c=c(n,p,[\mu^p]_{\mathcal{A}_p})$. The proof is overall similar to the one in Proposition \ref{prop:SP} except using \eqref{eq:SP0'} of Lemma \ref{lem:SP0'}, instead of \eqref{eq:SP0} of Lemma \ref{lem:SP0}.

	Now we will show (a) when $r=1$.
	Let $u\in C^{\infty}(\bar{Q}_1)$, $\text{dist}(\text{supp}\,u,K)>0$ and the number $N$ be such that
	\begin{align*}
	N^q=\dfrac{1}{\int_{Q_1}\mu^q\,dx}\int_{Q_1}|u|^q\mu^q\,dx.
	\end{align*}
	Again following the proof of \textbf{Step 2} with the weighted Poincar\'{e} and Sobolev-Poincar\'{e} inequality in Lemma \ref{lem:SP0'}, we show \eqref{eq:cap''} in case of $r=1$. If $r\neq 1$, the similar argument of change of variables from \eqref{eq:cap4.1} to \eqref{eq:cap10.1} works. Then (a) is proved.
	
	The statement (b) is proved similarly to above by using the weighted Poincar\'{e} inequality Lemma \ref{lem:SP0'}, together with \eqref{eq:doubling}. 
\end{proof}

The following Poincar\'{e} inequality for general Lipschitz domain, which is needed to main settings to our problem, also holds. The proof is as same as Corollary \ref{cor:SP} using Proposition \ref{prop:SP'} instead of Proposition \ref{prop:SP}.

\begin{corollary}\label{cor:SP'}
	Let $\Omega$ be $(L,R_0)$-Lipschitz for $L,R_0>0$, and let $\mathcal{D}\subset\partial\Omega$. Choose $Q_R\subset\setR^n$ such that $\Omega\subset Q_R$. If
	\begin{align}\label{eq:cap0'}
	\textrm{Cap}_{p,\mu^p}(\mathcal{D},Q_{2R})> 0
	\end{align}
	holds, then for some $c=c(n,p,[\mu^p]_{\mathcal{A}_p},\mu^p(Q_{2R}),\text{Cap}_{q,\mu^p}(K,Q_{2R}),L,R_0)$ we have
	\begin{align}\label{eq:wSPOmega'}
	\left(\int_{\Omega}\left|f\right|^p\mu^p\,dx\right)^{\frac{1}{p}}\leq c\left(\int_{\Omega}|\nabla f|^p\mu^p\,dx\right)^{\frac{1}{p}}
	\end{align}
	for any $f\in W^{1,p}(\Omega,\mathcal{D};\mu^p\,dx)$.
\end{corollary}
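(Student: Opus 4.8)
The plan is to reproduce the proof of Corollary~\ref{cor:SP} with $\mu$ replaced by $\mu^p$ and Proposition~\ref{prop:SP} replaced by Proposition~\ref{prop:SP'}. Since $\mu^p\in\mathcal{A}_p(\setR^n)$ and $\Omega$ is $(L,R_0)$--Lipschitz, $\Omega$ is an extension domain for the weighted Sobolev space $W^{1,p}(\,\cdot\,;\mu^p\,dx)$ (see \cite{Chu92}), and the extension operator may be chosen so that it still vanishes in a neighbourhood of $\mathcal{D}$ whenever the original function does. Thus for $f\in W^{1,p}(\Omega,\mathcal{D};\mu^p\,dx)$ one obtains $\overline f\in W^{1,p}(Q_{2R},\mathcal{D};\mu^p\,dx)$ with $\overline f\equiv f$ on $\Omega$, $\norm{\overline f}_{W^{1,p}(Q_{2R};\mu^p dx)}\lesssim\norm{f}_{W^{1,p}(\Omega;\mu^p dx)}$, and — because Jones-type reflections act on local polynomial approximations of $f$, so the correction terms on $Q_{2R}\setminus\Omega$ are controlled via the interior weighted Poincar\'e inequality on $\Omega$ — also the gradient bound $\norm{\nabla\overline f}_{L^p(Q_{2R};\mu^p dx)}\lesssim\norm{\nabla f}_{L^p(\Omega;\mu^p dx)}$.

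Next I would apply Proposition~\ref{prop:SP'}(a) to $\overline f$ on the cube $Q_R$, taking $K=\mathcal{D}$ (admissible since $\mathcal{D}\subset\partial\Omega\subset\overline{Q_R}$) and the exponent $q=p$. This choice of $q$ is what makes the argument self-contained: then $\mu^q=\mu^p$, $\mathrm{Cap}_{q,\mu^q}=\mathrm{Cap}_{p,\mu^p}$, and the constant in \eqref{eq:cap''} satisfies
\begin{align*}
C\leq\frac{c}{R}\left(\dfrac{\mu^p(Q_{2R})}{\mathrm{Cap}_{p,\mu^p}(\mathcal{D},Q_{2R})}\right)^{\frac1p},
\end{align*}
which is finite precisely by hypothesis~\eqref{eq:cap0'}. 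Since $C^\infty(\overline Q_R)$-functions with support away from $\mathcal{D}$ are dense in $W^{1,p}(Q_R,\mathcal{D};\mu^p dx)$ (the analogue, for Proposition~\ref{prop:SP'}, of the density remark after Proposition~\ref{prop:SP}), the inequality~\eqref{eq:wSP''} applies to $\overline f$, and raising it to the $p$th power and clearing the normalizing factors gives
\begin{align*}
\int_{Q_R}|\overline f|^p\mu^p\,dx\leq (CR)^p\int_{Q_R}|\nabla\overline f|^p\mu^p\,dx.
\end{align*}

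Finally I would restrict the left-hand integral to $\Omega$, where $\overline f=f$, and bound the right-hand side by $\int_\Omega|\nabla f|^p\mu^p\,dx$ using the gradient estimate for the extension from the first step; this yields~\eqref{eq:wSPOmega'} with a constant of the stated form, the scaling/change-of-variables bookkeeping for $R\neq1$ being exactly as in Proposition~\ref{prop:SP'} and Corollary~\ref{cor:SP}. The delicate point is the extension step: one needs the weighted extension theorem for the weight $\mu^p$ (not $\mu$), preservation of the homogeneous condition on $\mathcal{D}$, and, most importantly, the reduction of the right-hand side to $\norm{\nabla f}_{L^p(\Omega;\mu^p dx)}$ alone. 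If one prefers to sidestep this, the inequality can instead be obtained by contradiction: a sequence with $\norm{f_k}_{L^p(\Omega;\mu^p dx)}=1$ and $\norm{\nabla f_k}_{L^p(\Omega;\mu^p dx)}\to0$, extended to $Q_{2R}$, has by the weighted Rellich--Kondrachov theorem a subsequence converging in $L^p(Q_{2R};\mu^p dx)$ and weakly in $W^{1,p}(Q_{2R};\mu^p dx)$ to some $g$ which is constant on $\Omega$ and lies in the weakly closed subspace $W^{1,p}(Q_{2R},\mathcal{D};\mu^p dx)$; since $\mathrm{Cap}_{p,\mu^p}(\mathcal{D},Q_{2R})>0$, the quasicontinuous representative of $g$ must vanish on $\mathcal{D}$, forcing $g\equiv0$ and contradicting $\norm{g}_{L^p(\Omega;\mu^p dx)}=1$.
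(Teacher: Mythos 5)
Your proposal follows the paper's own route: extend $f$ from $\Omega$ to $Q_{2R}$ via the weighted extension theorem of Chua, then invoke Proposition~\ref{prop:SP'}(a) with $K=\mathcal{D}$ and $q=p$ on the larger cube. You are right, however, to flag the delicate point that the paper leaves implicit. Chua's extension theorem, as cited, gives $\norm{\overline f}_{W^{1,p}(Q_{2R};\mu^p dx)}\lesssim\norm{f}_{W^{1,p}(\Omega;\mu^p dx)}$, i.e.\ control of the \emph{full} weighted Sobolev norm; feeding that into Proposition~\ref{prop:SP'}(a) only yields
\begin{align*}
\int_\Omega |f|^p\mu^p\,dx \;\le\; c\left(\int_\Omega |f|^p\mu^p\,dx+\int_\Omega |\nabla f|^p\mu^p\,dx\right),
\end{align*}
which cannot be absorbed in general since the capacity-dependent constant $c$ need not be small. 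To close the argument one either needs a \emph{homogeneous} extension bound $\norm{\nabla\overline f}_{L^p(Q_{2R};\mu^p dx)}\lesssim\norm{\nabla f}_{L^p(\Omega;\mu^p dx)}$ --- which is true for Whitney--Jones type operators but is not literally what the cited statement provides and should be spelled out --- or one falls back on a compactness argument of the kind you sketch. Your second route is the standard and cleanest fix: extend a normalizing sequence, use the weighted Rellich--Kondrachov theorem for $\mathcal{A}_p$ weights to extract a strong $L^p$ limit $g$, observe that $g$ is constant on $\Omega$ because the gradients vanish in the limit, use weak closedness of $W^{1,p}(Q_{2R},\mathcal{D};\mu^p dx)$ to place $g$ in that space, and then use $\textrm{Cap}_{p,\mu^p}(\mathcal{D},Q_{2R})>0$ to rule out a nonzero constant. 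The only step there that deserves a few more lines is the last one: one should justify, in the weighted setting, that a function in $W^{1,p}(Q_{2R},\mathcal{D};\mu^p dx)$ whose restriction to $\Omega$ is a nonzero constant would produce admissible test functions in the capacity definition with arbitrarily small energy, contradicting the positivity hypothesis. Since the paper's proof of Corollary~\ref{cor:SP} is a single sentence and shares the same implicit gap, your write-up is at least as rigorous; the compactness alternative you append is what actually makes the argument watertight.
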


Using the above corollary, we can prove the existence and uniqueness result for \eqref{eq:plap.multiplier}: let $\setM:\setR^n\rightarrow\setR^{n\times n}_{\sym}$ be a matrix valued weight, i.e., $\abs{\setM(x)}>0$ a.e. for every $x\in\setR^n$ with $\abs{\cdot}$ being spectral norm of a matrix. Let us assume that $\abs{\setM(\cdot)}^p\in\mathcal{A}_p(\setR^n)$, $\setM(x)$ is symmetric and
\begin{align*}
\abs{\setM(x)}\abs{\setM(x)^{-1}}\leq \Lambda
\end{align*}
for some $\Lambda>0$. Define $\abs{\setM(x)}=\widetilde{\omega}(x)$. Then by \cite{BalDieGioPas22}, we have
\begin{align}\label{eq:ellip}
\Lambda^{-1}\widetilde{\omega}(x)|\xi|\leq |\setM(x)\xi|\leq \widetilde{\omega}(x)|\xi|\quad(\xi\in\setR^n).
\end{align}
Then we consider \eqref{eq:plap.multiplier} for $1<p<\infty$, where $l$ is a linear functional on $W^{1,p}(\Omega,\mathcal{D};\widetilde{\omega}^p dx)$, $\widetilde{\omega}^p\in\mathcal{A}_p(\setR^n)$, and $\frac{\partial u}{\partial \overline{\nu}}=\sum^{n}_{i=1}\setM_{ij}(x)\frac{\partial u}{\partial x_j}\nu_i$ is the outward conormal derivative of $u$. By Corollary \ref{cor:SP'}, if we assume that
\begin{align*}
\textrm{Cap}_{p,\widetilde{\omega}^p}(\mathcal{D},Q_{2R})> 0,
\end{align*}
then the weighted Poincar\'{e} inequality
\begin{align}\label{eq:wSPOmega0'}
\left(\int_{\Omega}\left|u\right|^p\widetilde{\omega}^p\,dx\right)^{\frac{1}{p}}\leq c\left(\int_{\Omega}|\nabla u|^p\widetilde{\omega}^p\,dx\right)^{\frac{1}{p}}
\end{align}
holds for some $c>0$, and so the norm of $W^{1,p}(\Omega,\mathcal{D};\widetilde{\omega} dx)$ is equivalent to the norm only with the gradient. Then Hahn-Banach theorem helps us to find that the functional $l$ is written as follows:
\begin{align*}
l(\phi)=-\sum^n_{i=1}\int_{\Omega}\widetilde{\mathbf{f}}\cdot \nabla \phi\,dx,
\end{align*}
where $\widetilde{\mathbf{f}}=(\widetilde{\mathbf{f}}_1,\dots,\widetilde{\mathbf{f}}_n):\Omega\rightarrow\setR^n$ with $|\widetilde{\mathbf{f}}|$ belonging to the dual space of $L^{p}_{\widetilde{\omega}^p}(\Omega)$, i.e., $|\widetilde{\mathbf{f}}|\in L^{p'}(\Omega;\widetilde{\omega}^{-p'}\,dx)$.
By letting
\begin{align*}
\widetilde{G}=(\widetilde{G}_1,\dots,\widetilde{G}_n)\quad\text{with}\quad \widetilde{G}_i=\left|\setM^{-1}\widetilde{\mathbf{f}}\right|^{\frac{2-p}{p-1}}\setM^{-1}\widetilde{\mathbf{f}}_i,
\end{align*}
we have $|\widetilde{G}|\in L^{p}(\Omega;\widetilde{\omega}^p\,dx)$ and $\widetilde{\mathbf{f}}(\cdot)=|\setM(\cdot)\widetilde{G}(\cdot)|^{p-2}\setM^2(\cdot)G(\cdot)$. Therefore, under the assumption \eqref{eq:wSPOmega'}, \eqref{eq:plap.multiplier} is equivalent to the following: For $\widetilde{G}\in L^p(\Omega;\widetilde{\omega}^p dx)$, let $u\in W^{1,p}(\Omega,\mathcal{D};\widetilde{\omega}^p dx)$ be a weak solution of 
\begin{align}\label{eq:eqw}
\begin{split}
-\divergence\left(\setM(x)^2\abs{\setM(x)\nabla u}^{p-2}\nabla u\right)&=-\divergence\left(\setM(x)^2\abs{\setM(x)\widetilde{G}}^{p-2}\widetilde{G}\right)\quad\text{in }\Omega,\\
u&=0\quad\text{on }\mathcal{D},\\
\dfrac{\partial u}{\partial \overline{\nu}}&=0\quad\text{on }\mathcal{N},
\end{split}
\end{align}
where $\frac{\partial u}{\partial \overline{\nu}}=\sum^{n}_{i=1}\setM_{ij}(x)\frac{\partial u}{\partial x_j}\nu_i$ is the outward conormal derivative of $u$ corresponding this problem.

Note that the above equation is the Euler-Lagrange equation of the following functional:
\begin{align*}
W^{1,p}(\Omega,\mathcal{D};\widetilde{\omega}^p dx)\ni v\mapsto \int_{\Omega}|\setM(x)\nabla v|^{p}\,dx-\int _{\Omega}\left<|\setM(x)G|^{p-2}\setM(x)G,\setM(x)Dv\right>\,dx.
\end{align*}
Then we also have the following existence and uniqueness result to the problem \eqref{eq:eqw}. The proof is similar to Theorem \ref{thm:exist}.
\begin{theorem}
Let $\Omega\subset\setR^n$ be a bounded $(L,R_0)$-Lipschitz domain. Assume that $\mathbb{M}(x):\setR^n\rightarrow\setR^{n\times n}_{\sym}$ satisfies $\abs{\mathbb{M}(x)}>0$ a.e., and $\abs{\mathbb{M}(x)}\abs{\mathbb{M}^{-1}(x)}\leq\Lambda$ for some $\Lambda>0$. Suppose that for $1<p<\infty$, $\abs{\mathbb{M}(x)}^p\in\mathcal{A}_p(\setR^n)$ and
\begin{align}\label{eq:cap0w}
\textrm{Cap}_{p,\widetilde{\omega}^p}(\mathcal{D},Q_{2R})> 0
\end{align}
hold for $\Omega\subset Q_R$ with $\widetilde{\omega}(x)=\abs{\mathbb{M}(x)}$ and $\abs{G}\in L^p(\Omega;\widetilde{\omega}^p dx)$. Then there exists a unique weak solution $u\in W^{1,p}(\Omega,\mathcal{D};\widetilde{\omega}^p dx)$ of \eqref{eq:eqw}. Moreover, we have the estimate
	\begin{align*}
	\int_{\Omega}|\nabla v|^p\widetilde{\omega}^p\,dx \leq c\int_{\Omega}|\widetilde{G}|^p\widetilde{\omega}^p\,dx
	\end{align*}
	with $c=c(n,p,\Lambda)$.
\end{theorem}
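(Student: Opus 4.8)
The plan is to mirror the argument for Theorem~\ref{thm:exist}, replacing the scalar weight $\omega$ there by the multiplier weight $\mathbb{M}$ and using Corollary~\ref{cor:SP'} in place of Corollary~\ref{cor:SP}. First I would note that, since $\widetilde{\omega}^p=\abs{\mathbb{M}}^p\in\mathcal{A}_p(\setR^n)$, smooth functions are dense in the corresponding weighted Sobolev space, so $W^{1,p}(\Omega,\mathcal{D};\widetilde{\omega}^p dx)$ is well defined as a reflexive Banach space. Next, Corollary~\ref{cor:SP'} applied with the hypothesis \eqref{eq:cap0w} yields the weighted Poincar\'e inequality \eqref{eq:wSPOmega0'}, so that on $W^{1,p}(\Omega,\mathcal{D};\widetilde{\omega}^p dx)$ the quantity $(\int_\Omega\abs{\nabla v}^p\widetilde{\omega}^p\,dx)^{1/p}$ is an equivalent norm; combining this with \eqref{eq:ellip} shows that $v\mapsto(\int_\Omega\abs{\mathbb{M}(x)\nabla v}^p\,dx)^{1/p}$ is an equivalent norm as well, with constants depending only on $n$, $p$, $\Lambda$ and the Poincar\'e constant.

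Then I would treat the variational problem directly. Consider the functional
\begin{align*}
J(v)=\int_{\Omega}\abs{\mathbb{M}(x)\nabla v}^{p}\,dx-\int_{\Omega}\skp{\abs{\mathbb{M}(x)\widetilde{G}}^{p-2}\mathbb{M}(x)\widetilde{G}}{\mathbb{M}(x)\nabla v}\,dx
\end{align*}
on $W^{1,p}(\Omega,\mathcal{D};\widetilde{\omega}^p dx)$. The first term is nonnegative and, by the previous step, comparable to $\norm{v}^p$; the second term is dominated via H\"older's inequality by $(\int_\Omega\abs{\mathbb{M}\widetilde{G}}^p\,dx)^{(p-1)/p}(\int_\Omega\abs{\mathbb{M}\nabla v}^p\,dx)^{1/p}$, which is finite because $\widetilde{G}\in L^p(\Omega;\widetilde{\omega}^p dx)$ and \eqref{eq:ellip} holds. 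Hence $J$ is well defined, coercive and bounded below. Since $\xi\mapsto\abs{\mathbb{M}(x)\xi}^p$ is convex (a convex function composed with a linear map) and the linear term is weakly continuous, $J$ is weakly lower semicontinuous, so the direct method of the calculus of variations produces a minimizer $u\in W^{1,p}(\Omega,\mathcal{D};\widetilde{\omega}^p dx)$; requiring that the first variation of $J$ at $u$ vanish shows that $u$ is a weak solution of \eqref{eq:eqw}.

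For the energy estimate I would test the weak formulation of \eqref{eq:eqw} with $\phi=u$, which gives $\int_\Omega\abs{\mathbb{M}\nabla u}^p\,dx=\int_\Omega\skp{\abs{\mathbb{M}\widetilde{G}}^{p-2}\mathbb{M}\widetilde{G}}{\mathbb{M}\nabla u}\,dx\le\int_\Omega\abs{\mathbb{M}\widetilde{G}}^{p-1}\abs{\mathbb{M}\nabla u}\,dx$; applying Young's inequality with exponents $p/(p-1)$ and $p$ and absorbing the gradient term yields $\int_\Omega\abs{\mathbb{M}\nabla u}^p\,dx\le c\int_\Omega\abs{\mathbb{M}\widetilde{G}}^p\,dx$, and \eqref{eq:ellip} then converts both sides back into $\widetilde{\omega}^p$-weighted norms of $\nabla u$ and $\widetilde{G}$, giving the stated bound with $c=c(n,p,\Lambda)$. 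Uniqueness follows from strict monotonicity of the operator $\xi\mapsto\abs{\mathbb{M}(x)\xi}^{p-2}\mathbb{M}(x)^2\xi$: setting $a=\mathbb{M}(x)\xi$, $b=\mathbb{M}(x)\eta$ and using the classical vector inequality $\skp{\abs{a}^{p-2}a-\abs{b}^{p-2}b}{a-b}>0$ for $a\ne b$, together with $\abs{\mathbb{M}(x)(\xi-\eta)}\ge\Lambda^{-1}\widetilde{\omega}(x)\abs{\xi-\eta}$ from \eqref{eq:ellip}, one subtracts the weak formulations for two solutions and tests with their difference in the standard way. The only step that genuinely uses the hypotheses of the theorem is the verification of coercivity, where the capacity condition \eqref{eq:cap0w} enters through Corollary~\ref{cor:SP'}; the rest is the routine monotone/variational machinery already deployed for Theorem~\ref{thm:exist}.
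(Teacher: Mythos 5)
Your proposal is correct and follows essentially the same route as the paper, which simply notes that the proof mirrors that of Theorem~\ref{thm:exist}: use Corollary~\ref{cor:SP'} with \eqref{eq:cap0w} to get the weighted Poincar\'e inequality \eqref{eq:wSPOmega0'}, existence via the direct method for the same convex functional the paper writes down, the energy estimate by testing with $u$ and applying Young's inequality and \eqref{eq:ellip}, and uniqueness from strict monotonicity. You have merely filled in the standard calculus-of-variations details (coercivity, weak lower semicontinuity, reduction of monotonicity to the classical vector inequality) that the paper leaves implicit.
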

Now we give the main assumption for higher integrability result in this section. Recall $\gamma_1$ as in \eqref{eq:gamma1}. Let us define
\begin{align*}
q:=
\begin{cases}
\max\left\{\gamma_1,\frac{p+1}{2}\right\}&\quad\text{if }p\in\left(1,\frac{n}{n-1}\right],\\
\max\left\{\gamma_1,\frac{np}{n+p}\right\}&\quad\text{if }p\in\left(\frac{n}{n-1},\infty\right).
\end{cases}
\end{align*}
Then we consider the following assumption.
\begin{assumption}
Let $r_0>0$ be given. for any $x_0\in \mathcal{D}$ and $r\in(0,r_0)$, we have
\begin{align}\label{eq:cen'}
\dfrac{\textrm{Cap}_{q,\omega^q}(\mathcal{D}\cap Q_{r}(x_0),\setR^n)}{\omega^q(Q_r(x_0))}\geq c_0 r^{-q}
\end{align}
for some $c_0>0$ independent of $x_0$ and $r$.	
\end{assumption}

Similar to Theorem \ref{thm:hi}, we have the following result for the problem \eqref{eq:eqw}.

\begin{theorem}[Higher integrability]
	Let $\Omega\subset\setR^n$ be a bounded $(L,R_0)$-Lipschitz domain, $1<p<\infty$, $\widetilde{\omega}^p\in\mathcal{A}_p(\Omega)$, and assume \eqref{eq:cen'}. If $G\in L^{p+{\delta_0}}(\Omega;\widetilde{\omega}^{p+\delta_0} dx)$ with $\delta_0>0$, then there exist $\delta_1=\delta_1(n,\delta_0)>0$ such that the following holds: the weak solution $u\in W^{1,p}(\Omega,\mathcal{D};\widetilde{\omega}^p dx)$ of \eqref{eq:eqw} satisfies $\abs{\nabla u}\in L^{p+\delta}(\Omega;\widetilde{\omega}^{p+\delta} dx)$ for any  $\delta\leq\delta_1$. Moreover, the estimate
	\begin{align}\label{eq:hi'}
	\int_{\Omega}|\nabla u|^{p+\delta}\widetilde{\omega}(x)^{p+\delta}\,dx\leq c\int_{\Omega}|G|^{p+\delta}\widetilde{\omega}(x)^{p+\delta}\,dx
	\end{align}
	holds for some $c=c(n,p,\Lambda,[\widetilde{\omega}^p]_{\mathcal{A}_p},q,L,R_0,|\Omega|,c_0,r_0)>0$.
\end{theorem}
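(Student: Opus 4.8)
The plan is to mimic the proof of Theorem~\ref{thm:hi} essentially verbatim, carrying the weights $\widetilde\omega^{p}$ and $\widetilde\omega^{q}$ in place of $\omega$ and using Lemma~\ref{lem:SP0'} and Proposition~\ref{prop:SP'} instead of Lemma~\ref{lem:SP0} and Proposition~\ref{prop:SP}. The organizing remark is that, by \eqref{eq:ellip}, $\abs{\mathbb M(x)\nabla u}\eqsim\widetilde\omega(x)\abs{\nabla u}$ and $\abs{\mathbb M^{2}(x)\xi}\eqsim\widetilde\omega(x)\abs{\mathbb M(x)\xi}$, so that \eqref{eq:hi'} is equivalent to the \emph{unweighted} self-improvement $\abs{\mathbb M G}\in L^{p+\delta}(\Omega)\Rightarrow\abs{\mathbb M\nabla u}\in L^{p+\delta}(\Omega)$; it is the scalar function $\abs{\mathbb M\nabla u}$ on which I would ultimately run Gehring's lemma.

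For an interior cube $Q_{2r}\Subset\Omega$, I would test the weak form of \eqref{eq:eqw} with $\phi=(u-\mean{u}_{Q_{2r}})\eta^{p}$, $\eta$ a standard cutoff with $\eta\equiv 1$ on $Q_{r}$ and $\abs{\nabla\eta}\lesssim 1/r$, and use Young's inequality together with $\abs{\mathbb M^{2}G}\eqsim\widetilde\omega\abs{\mathbb M G}$ to obtain the Caccioppoli estimate
\begin{align*}
\dashint_{Q_{r}}\abs{\mathbb M\nabla u}^{p}\,dx\lesssim\dashint_{Q_{2r}}\widetilde\omega^{p}\left|\frac{u-\mean{u}_{Q_{2r}}}{r}\right|^{p}\,dx+\dashint_{Q_{2r}}\abs{\mathbb M G}^{p}\,dx.
\end{align*}
Inequality \eqref{eq:SP1'} of Lemma~\ref{lem:SP0'} bounds the first term on the right by $\bigl(\dashint_{Q_{2r}}\abs{\nabla u}^{q}\widetilde\omega^{q}\,dx\bigr)^{p/q}\eqsim\bigl(\dashint_{Q_{2r}}\abs{\mathbb M\nabla u}^{q}\,dx\bigr)^{p/q}$, again by \eqref{eq:ellip}, so that
\begin{align*}
\left(\dashint_{Q_{r}}\abs{\mathbb M\nabla u}^{p}\,dx\right)^{1/p}\lesssim\left(\dashint_{Q_{2r}}\abs{\mathbb M\nabla u}^{q}\,dx\right)^{1/q}+\left(\dashint_{Q_{2r}}\abs{\mathbb M G}^{p}\,dx\right)^{1/p},
\end{align*}
a reverse Hölder inequality with exponents $q<p$.

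Next I would treat the boundary. Fixing $x_{0}\in\partial\Omega$, I flatten $\partial\Omega\cap Q_{R}(x_{0})$ via $\Psi(x',x_{n})=(x',x_{n}-\psi(x'))$ and set $\widetilde{\mathbb M}=\mathbb M\circ\Psi^{-1}$, $\widetilde u=u\circ\Psi^{-1}$, $\widetilde G=G\circ\Psi^{-1}$ and $\widetilde\omega=\abs{\widetilde{\mathbb M}}$; as in Section~\ref{sec:4}, since $\psi$ is Lipschitz and $\widetilde Q_{R}\subset\Psi(Q_{R})\subset Q_{(1+\sqrt{n-1}L)R}$, one has $\widetilde\omega^{p}\in\mathcal A_{p}$ with controlled constant, and \eqref{eq:ellip} is preserved up to the harmless factor $\abs{\nabla\Psi}\eqsim 1$. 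The transformed equation carries the coefficient matrix $A(y)=(\nabla\Psi(y))^{t}\widetilde{\mathbb M}^{2}(y)(\nabla\Psi(y))$; extending $\widetilde\omega$ and $\widetilde u$ evenly across $\{y_{n}=0\}$ and $\nabla\Psi\nabla\widetilde u$, the entries of $A$ and $\widetilde G$ with the same parities as in Theorem~\ref{thm:hi}, I obtain $\widehat u\in W^{1,p}(\widetilde Q_{R},\widetilde{\mathcal D}_{R};\widehat\omega^{p}\,dy)$ solving a divergence-form equation on the full cube $\widetilde Q_{R}$ with $\widehat\omega^{p}\in\mathcal A_{p}$, with $A$ uniformly elliptic, with $\abs{\widehat\nabla\widehat u}\eqsim\abs{\nabla\widetilde u}$, and with the homogeneous conormal condition on $\widetilde{\mathcal N}_{R}$ absorbed by the reflection. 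On cubes $Q_{r}(y_{0})\subset\widetilde Q_{R/2}$ I run the two-case Caccioppoli argument: if $Q_{3r/2}(y_{0})\cap\widetilde{\mathcal D}_{R}=\emptyset$ I subtract the mean and invoke \eqref{eq:SP1'}; if $Q_{3r/2}(y_{0})\cap\widetilde{\mathcal D}_{R}\neq\emptyset$ I test with $\widehat u\,\eta^{p}$, pick $z_{0}\in Q_{3r/2}(y_{0})\cap\widetilde{\mathcal D}_{R}$, pull back to $z=\Psi^{-1}(z_{0})\in\mathcal D$, transfer the capacity density \eqref{eq:cen'} to the flattened and reflected picture using the scaling identity \eqref{eq:cap8} for $\mathrm{Cap}_{q,\widetilde\omega^{q}}$, the comparability $\widetilde\omega\circ\Psi\eqsim\widetilde\omega$, and the standard comparison of capacities at comparable scales, and then apply Proposition~\ref{prop:SP'}(a) (only part~(a) is used). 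Both cases produce the same reverse Hölder inequality for $V:=\abs{\widehat\nabla\widehat u}\,\widehat\omega$ with respect to Lebesgue measure. Gehring's lemma \cite[Theorem~1]{MasMil00} then yields $V\in L^{p+\delta_{2}}_{\mathrm{loc}}(\widetilde Q_{R/2})$ for some $\delta_{2}>0$ with the quantitative bound in terms of $\abs{\widetilde G}\,\widetilde\omega$; undoing the even extension and the change of variables, combining with the interior estimate through a finite covering of $\overline\Omega$, and invoking \eqref{eq:ellip} once more, gives \eqref{eq:hi'} for every $\delta\le\delta_{1}$.

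The steps that actually require care --- all of them local analogues of what is carried out in Theorem~\ref{thm:hi} --- are: (i) checking that the even/odd reflection produces a genuinely uniformly elliptic $A$ and an $\mathcal A_{p}$ weight $\widehat\omega^{p}$, which hinges on $\widetilde\omega^{\gamma}\in\mathcal A_{\gamma}$ for $\gamma\in[\gamma_{1},p]$ from \eqref{eq:gamma1}; (ii) verifying that the scaling-invariant capacity hypothesis \eqref{eq:cen'} survives both the bi-Lipschitz flattening and the reflection, which is where \eqref{eq:cap8} and $\widetilde\omega\circ\Psi\eqsim\widetilde\omega$ enter; and (iii) confirming that $\widetilde\omega^{p+\delta}\in\mathcal A_{p+\delta}$ for $\delta$ small, so that both sides of \eqref{eq:hi'} are meaningful and the Gehring exponent is uniform --- this is the openness of the Muckenhoupt classes, once more via \eqref{eq:gamma1} and the reverse Hölder inequality for $\widetilde\omega^{p}$. \emph{I expect step~(ii), transporting the dimensionless capacity lower bound through the flattening map and the reflection while keeping every constant independent of $r$, to be the delicate point}; everything else is bookkeeping identical to the measure case of Theorem~\ref{thm:hi}.
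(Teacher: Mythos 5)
Your proposal follows essentially the same route as the paper's proof: flatten via $\Psi$, extend evenly/oddly across $\{y_n=0\}$, run the two-case Caccioppoli argument using Lemma~\ref{lem:SP0'} and Proposition~\ref{prop:SP'}(a), transfer \eqref{eq:cen'} through the flattening via \eqref{eq:cap8}, and close with Gehring plus a covering argument; your observation that the reverse H\"older inequality is an unweighted one for the scalar $V=\abs{\mathbb{M}\nabla u}$ is exactly why the paper here uses a plain Gehring lemma (it cites \cite{KriStr18}, while your \cite{MasMil00} applies equally since Lebesgue measure is a trivial Muckenhoupt weight). Your worry (iii) is moot --- the spaces $L^{p+\delta}(\Omega;\widetilde\omega^{p+\delta}\,dx)$ are ordinary weighted Lebesgue spaces needing no Muckenhoupt structure on $\widetilde\omega^{p+\delta}$, and Gehring gives a uniform exponent from the reverse H\"older constant alone --- but this does not affect the correctness of the argument.
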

\begin{proof}
	Using the transformation $y=\Psi(x)$, similarly to \eqref{eq:eq2}, the problem \eqref{eq:eqw} locally takes of the form
	\begin{align}\label{eq:eq2'}
	\begin{split}
	&-\divergence\left((\nabla\Psi(y))^t\widetilde{\setM}^2(y)\abs{\widetilde{\setM}(y)\nabla\Psi(y)\nabla \widetilde{u}(y)}^{p-2}(\nabla\Psi(y)\nabla \widetilde{u}(y))\right)\\
	&\quad\quad\quad\quad\quad=-\divergence\left((\nabla\Psi(y))^t\abs{\widetilde{\setM}(y)\widetilde{G}(y)}^{p-2}\widetilde{\setM}(y)^2\widetilde{G}(y)\right)\quad\text{in }\widetilde{Q}^+_R
	\end{split}
	\end{align}
	with
	\begin{align*}
	\begin{split}
	\widetilde{u}=0\quad\text{on }\widetilde{\mathcal{D}}_R\quad\text{and}\quad
	\dfrac{\partial \widetilde{u}}{\partial \widetilde{\nu}}=0\quad\text{on }\widetilde{\mathcal{N}}_{R}.
	\end{split}
	\end{align*}
	Here, we note the following:
	\begin{itemize}
		\item We have $\widetilde{\setM}(y):=(\setM\circ\Psi^{-1})(y)$ and $\widetilde{\omega}(x):=\abs{\widetilde{\setM}(y)}\in \mathcal{A}_p$.
		\item Recall $\widetilde{\mathcal{D}}_{R}=\Psi(\mathcal{D}\cap Q_R)\cap \widetilde{Q}_{R}$ and $\widetilde{\mathcal{N}}_{R}=\Psi(\mathcal{N}\cap Q_{R})\cap \widetilde{Q}_{R}$ as in the proof of Theorem \ref{thm:hi}.
		\item Note that $\widetilde{u}(y):=(u\circ\Psi^{-1})(y)\in W^{1,p}(\widetilde{Q}^+_{R},\widetilde{\mathcal{D}}_R;\widetilde{\omega}^p dy)$, and $\widetilde{G}(y):=(G\circ\Psi^{-1})(y)\in L^p(\widetilde{Q}^+_{R};\widetilde{\omega}^p dy)$ from $\psi$ being a Lipschitz map and \eqref{eq:ellip}.
		\item The outer conormal derivative of $\widetilde{u}$ is denoted by $\frac{\partial\widetilde{u}}{\partial{\tilde{\nu}}}$.
	\end{itemize}

	Now we apply the even extension for the weight $\widetilde{\omega}(y)$ and $\widetilde{u}$ with respect to $\{y:y_n=0\}$. We write the extended function of $\widetilde{\omega}$ as $\widehat{\omega}$, and $\widetilde{u}$ as $\widehat{u}$. Then $\widehat{\omega}^p\in\mathcal{A}_p$ and $\widehat{u}\in W^{1,p}(\widetilde{Q}_{R},\widetilde{\mathcal{D}}_R;\widehat{\omega}^p dy)$ satisfies the following equation:
	\begin{align}\label{eq:eq3'}
	\begin{split}
	-\divergence\left(\abs{\widehat{\nabla} \widehat{u}(y)}^{p-2}\widehat{\setM}(y)^2\nabla \widehat{u}(y)\right)=-\divergence\left(\widehat{G}(y)\right)\quad\text{in }\widetilde{Q}_R
	\end{split}
	\end{align}
	with
	\begin{align*}
	\begin{split}
	\widehat{u}=0\quad\text{on }\widetilde{\mathcal{D}}_R.
	\end{split}
	\end{align*}
	Here, we list definitions of each quantities and properties:
	\begin{itemize}
		\item We define $\widehat{\nabla} \widehat{u}(y)$ as an odd extension of $\widetilde{\setM}(y)\nabla\Psi(y)\nabla \widetilde{u}(y)$. Since $\psi$ is Lipschitz, $|\widehat{\nabla} \widehat{u}(y)|\eqsim |\nabla \widetilde{u}(y)|\widehat{\omega}(y)$.
		\item Note that $A(y):=(\nabla\Psi(y))^t\widetilde{\setM}^2(y)(\nabla\Psi(y))$ is a symmetric matrix so that $A(y)^{\frac{1}{2}}$ exists. Then the weighted coefficient matrix $(\widehat{\setM}_{ij}(y))_{1\leq i,j\leq n}=\widehat{\setM}(y)$ is such that $\widehat{\setM}_{in}(y)=\widehat{\setM}_{ni}(y)$ for $i\neq n$ are odd extensions of $A(y)^{\frac{1}{2}}_{in}$, and all other elements of $\widehat{\setM}_{ij}(y)$ are even extensions of $A(y)^{\frac{1}{2}}_{ij}$. Note that $\widehat{\setM}(y)$ satisfies $\widetilde{\omega}(x)|\xi|\eqsim |\setM(x)\xi|$ for any $\xi\in\setR^n$ which is similar to \eqref{eq:ellip}.
		\item We write $\widehat{G}(y)=(\widehat{G}_1(y),\dots,\widehat{G}_n(y))\in L^p(\widetilde{Q}^+_{R};\widetilde{\omega}^p dy)$, where $\widehat{G}_i(y)$ are even extensions of $[(\nabla\Psi(y))^t\abs{\widetilde{\setM}(y)\widetilde{G}(y)}^{p-2}\widetilde{\setM}(y)^2\widetilde{G}(y)]_i(y)$ for $i\neq n$, and $\widehat{G}_n(y)$ are the odd extension of $[(\nabla\Psi(y))^t\abs{\widetilde{\setM}(y)\widetilde{G}(y)}^{p-2}\widetilde{\setM}(y)^2\widetilde{G}(y)]_n(y)$. Note that
		\begin{align*}
		|\widehat{G}|\eqsim \widehat{\omega}^{p}|\widetilde{G}|^{p-1}.
		\end{align*}
	\end{itemize}
	Then $\widehat{u}\in W^{1,p}(\widetilde{Q}_{R},\widetilde{\mathcal{D}}_R;\widehat{\omega}^p dy)$ from \eqref{eq:eq3'} satisfies
	\begin{align}\label{eq:eq5'}
	\int_{\widetilde{Q}_{R}}\abs{\widehat{\nabla} \widehat{u}}^{p-2}(\widehat{\setM}\nabla \widehat{u})\cdot(\widehat{\setM}\nabla \phi)\,dy=\int_{\widetilde{Q}_{R}}\widehat{G}\cdot \nabla \phi\,dy
	\end{align}
	for every $\phi\in W^{1,p}_0(\widetilde{Q}_{R},\widetilde{\mathcal{D}}_R;\widehat{\omega}^p dy)$.

	From now on, we assume
	\begin{align*}
	y_0\in \widetilde{Q}_{R/2}\setminus\partial \widetilde{Q}_{R/2},\quad\text{and }r\leq\frac{1}{2}\distance(y_0,\partial \widetilde{Q}_{R/2}).
	\end{align*}
First, consider the case of $Q_{3r/2}(y_0)\subset \widetilde{Q}_{R}\setminus \widetilde{\mathcal{D}}_{R}$. Test $\phi=(\widehat{u}-\lambda)\eta^p$ in \eqref{eq:eq5'} for $\lambda=\mean{\widehat{u}}_{Q_{3r/2}(y_0)}$, and the cutoff function $\eta\in C^{\infty}_0(Q_{3R/2}(y_0))$ satisfies $0\leq \eta\leq 1$, $\eta\equiv 1$ in $Q_{r}(y_0)$ and $|\nabla \eta|\leq c/r$. Then \eqref{eq:eq5'}, Young's inequality, and $|\widehat{G}|\eqsim \widehat{\omega}^{p}|\widetilde{G}|^{p-1}$ imply the following Caccioppoli inequality
	\begin{align}\label{eq:hi3'}
	\int_{Q_{r}(y_0)}|\nabla \widehat{u}|^p\widehat{\omega}^p \,dy\leq c(n,p,\Lambda)\left(\int_{Q_{3r/2}(y_0)}\left|\frac{\widehat{u}-\lambda}{r}\right|^p\widehat{\omega}^p\,dy+\int_{Q_{3r/2}(y_0)}|\widehat{G}|^p\widehat{\omega}^p\,dy\right).
	\end{align}
	Moreover, from Lemma \ref{lem:SP0'},
	\begin{align*}
	\left(\dashint_{Q_{3r/2}(y_0)}\left|\frac{\widehat{u}-\lambda}{r}\right|^{p}\widehat{\omega}^p\,dx\right)^{1/{p}}\leq c\left(\dashint_{Q_{3r/2}(y_0)}|\nabla \widehat{u}|^{q}\widehat{\omega}^{q}\,dx\right)^{1/{q}}
	\end{align*}
	and so we have
	\begin{align}\label{eq:hi3.1'}
	\begin{split}
	&\left(\dashint_{Q_r(y_0)}|\nabla \widehat{u}|^p\widehat{\omega}^p\,dy\right)^{1/p}\\
	&\quad\leq c\left(\left(\dashint_{Q_{2r}(y_0)}|\nabla \widehat{u}|^{q}\widehat{\omega}^{q}\,dy\right)^{1/q}+\left(\dashint_{Q_{2r}(y_0)}|\widehat{G}|^p\widehat{\omega}^p\,dy\right)^{1/p}\right).
	\end{split}
	\end{align}

	In case of $Q_{3r/2}(y_0)\cap \widetilde{\mathcal{D}}_{R}\neq \emptyset$, we choose a test function $\phi=u\eta^p$ in \eqref{eq:eq5'} and so we get
	\begin{align}\label{eq:hi4'}
	\int_{Q_r(y_0)}|\nabla \widehat{u}|^p\widehat{\omega}^p\,dy\leq c(n,p,L)\left(\int_{Q_{2r}(y_0)}\left|\frac{\widehat{u}}{r}\right|^p\widehat{\omega}^p\,dy+\int_{Q_{2r}(y_0)}|\widehat{G}|^p\widehat{\omega}^p\,dy\right).
	\end{align}
	Now we apply Proposition \ref{prop:SP'} with the assumption \eqref{eq:cen'}. With the same argument for obtaining \eqref{eq:cen2}, Proposition \ref{prop:SP'} is applicable similarly and so it follows that
	\begin{align*}
	\left(\dashint_{Q_{2r}(y_0)}\left|\frac{\widehat{u}}{r}\right|^p\widehat{\omega}^p\,dy\right)^{\frac{1}{p}}\leq c(L,n,c_0)\left(\dashint_{Q_{2r}(y_0)}|\nabla \widehat{u}|^{q}\widehat{\omega}^{q}\,dy\right)^{\frac{1}{q}}.
	\end{align*}
	Combining with \eqref{eq:hi4'}, we again obtain \eqref{eq:hi3.1'}. Now we use Gehring's lemma \cite{KriStr18} to find
	\begin{align*}
	\begin{split}
	&\left(\dashint_{Q_r(y_0)}|\nabla \widehat{u}|^{p+\delta}\widehat{\omega}^{p+\delta}\,dy\right)^{1/(p+\delta)}\\
	&\quad\leq c\left(\left(\dashint_{Q_{2r}(y_0)}|\nabla \widehat{u}|^{p}\widehat{\omega}^{p}\,dy\right)^{1/p}+\left(\dashint_{Q_{2r}(y_0)}|\widehat{G}|^{p+\delta}\widehat{\omega}^{p+\delta}\,dy\right)^{1/(p+\delta)}\right).
	\end{split}
	\end{align*}
	In other words, we have
	\begin{align*}
	\begin{split}
	&\left(\int_{Q_r(y_0)}|\nabla \widehat{u}|^{p+\delta}\widehat{\omega}^{p+\delta}\,dy\right)^{1/(p+\delta)}\\
	&\quad\leq c\frac{1}{r^{n(1/ps-1/p)}}\left(\int_{Q_{2r}(y_0)}|\nabla \widehat{u}|^{p}\widehat{\omega}^{p}\,dy\right)^{1/p}+c\left(\int_{Q_{2r}(y_0)}|\widehat{G}|^{p+\delta}\widehat{\omega}^{p+\delta}\,dy\right)^{1/(p+\delta)}.
	\end{split}
	\end{align*}
	Then the above estimate implies
	\begin{align*}
	\begin{split}
	&\left(\int_{Q_r(y_0)}|\nabla \widetilde{u}|^{p+\delta}\widetilde{\omega}^{p+\delta}\,dy\right)^{1/(p+\delta)}\\
	&\quad\leq c\left(\int_{Q_{2r}(y_0)}|\nabla \widetilde{u}|^{p}\widetilde{\omega}^{p}\,dy\right)^{1/p}+c\left(\int_{Q_{2r}(y_0)}|\widetilde{G}|^{p+\delta}\widetilde{\omega}^{p+\delta}\,dy\right)^{1/(p+\delta)}
	\end{split}
	\end{align*}
	with $c=c(n,p,[\omega^{p}]_{\mathcal{A}_p},c_0,L,R_0)>0$, since the function $\widehat{u}$ was the even extension of $\widetilde{u}$ with respect to $\{y:y_n=0\}$, together with the definition of $\widehat{\omega}$ and $\widehat{G}$ from $\widetilde{\omega}$ and $\widetilde{G}$, respectively. Now by the covering argument together with the interior estimate
	\begin{align*}
	\begin{split}
	&\left(\int_{Q_r}|\nabla \widetilde{u}|^{p+\delta}\widetilde{\omega}^{p+\delta}\,dy\right)^{1/(p+\delta)}\\
	&\quad\leq c\left(\int_{Q_{2r}}|\nabla \widetilde{u}|^{p}\widetilde{\omega}^{p}\,dy\right)^{1/p}+c\left(\int_{Q_{2r}}|\widetilde{G}|^{p+\delta}\widetilde{\omega}^{p+\delta}\,dy\right)^{1/(p+\delta)}
	\end{split}
	\end{align*}
	with $Q_{2r}\Subset\Omega$, the proof is completed.
\end{proof}

\printbibliography

\end{document}